\newtheorem{theorem}{Theorem}[section]
\newtheorem{proposition}[theorem]{Proposition}
\newtheorem{corollary}[theorem]{Corollary}
\newtheorem{lemma}[theorem]{Lemma}
\theoremstyle{definition}
\newtheorem{definition}[theorem]{Definition}
\theoremstyle{remark}
\newtheorem{remark}[theorem]{Remark}
\DeclareMathOperator\sech{sech}
\newcommand{\al}{\alpha}
\newcommand{\de}{\delta}
\newcommand{\ep}{\varepsilon}
\newcommand{\ga}{\gamma}
\newcommand{\ka}{\kappa}
\newcommand{\la}{\lambda}
\newcommand{\om}{\omega}
\newcommand{\si}{\sigma}
\newcommand{\vp}{\varphi}
\newcommand{\De}{\Delta}
\newcommand{\La}{\Lambda}
\newcommand{\Si}{\Sigma}
\newcommand{\Om}{\Omega}
\newcommand{\hPhi}{\widehat{\Phi}}
\newcommand{\hv}{\widehat{v}}
\def\RR{\mathbb{R}}
\def\ZZ{\mathbb{Z}}
\def\TT{\mathbb{T}}
\newcommand{\bE}{\mathbb{E}}
\newcommand{\bP}{\mathbb{P}}
\newcommand{\cA}{{\mathcal A}}
\newcommand{\cB}{{\mathcal B}}
\newcommand{\cF}{{\mathcal F}}
\newcommand{\cG}{{\mathcal G}}
\newcommand{\cH}{{\mathcal H}}
\newcommand{\cQ}{{\mathcal Q}}
\newcommand{\cT}{{\mathcal T}}
\newcommand{\cV}{{\mathcal V}}
\newcommand{\fS}{{\mathfrak S}}
\newcommand{\pd}{\partial}
\newcommand\minus\backslash
\newcommand\lan\langle
\newcommand\ran\rangle
\DeclareRobustCommand{\Bint}
{\mathop{%
		\text{%
			\settowidth{\intwidth}{$\int$}%
			\makebox[0pt][l]{\makebox[\intwidth]{$-$}}%
			$\int$}}}
\newcommand\Lone{\xrightarrow[\mathrm{a.s.}]{L^1}}
\newcommand\st{^{\mathrm{s}}}
\newcommand\un{^{\mathrm{u}}}
\renewcommand\leq\leqslant
\renewcommand\geq\geqslant
\newlength{\intwidth}
\newcommand\hphi{\widehat{\phi}}
\newcommand\tV{\widetilde{V}}
\numberwithin{equation}{section}
\begin{document}

\title[Non-integrability and chaos for natural Hamiltonian systems]{Non-integrability  and chaos for natural Hamiltonian systems with a random potential}

 \author{Alberto Enciso}
 \address{Instituto de Ciencias Matem\'aticas, Consejo Superior de
   Investigaciones Cient\'\i ficas, 28049 Madrid, Spain}
 \email{aenciso@icmat.es}

 \author{Daniel Peralta-Salas}
 \address{Instituto de Ciencias Matem\'aticas, Consejo Superior de
   Investigaciones Cient\'\i ficas, 28049 Madrid, Spain}
 \email{dperalta@icmat.es}

\author{\'Alvaro Romaniega}
\address{Instituto de Ciencias Matem\'aticas, Consejo Superior de
 Investigaciones Cient\'\i ficas, 28049 Madrid, Spain}
\email{alvaro.romaniega@icmat.es}

%
%
\begin{abstract}
Consider the ensemble of Gaussian random potentials $\{V^L(q)\}_{L=1}^\infty$ on the $d$-dimensional torus where, essentially, $V^L(q)$ is a real-valued trigonometric polynomial of degree~$L$ whose coefficients are independent standard normal variables. Our main result ensures that, with a probability tending to~1 as $L\to\infty$, the dynamical system associated with the natural Hamiltonian function defined by this random potential, $H^L:=\frac12|p|^2+ V^L(q)$, exhibits a number of chaotic regions which coexist with a positive-volume set of invariant tori. In particular, these systems are typically neither integrable with non-degenerate first integrals nor ergodic. An analogous result for random natural Hamiltonian systems defined on the cotangent bundle of an arbitrary compact Riemannian manifold is presented too.
\end{abstract}

\maketitle

\section{Introduction}

It is intuitively clear that a typical Hamiltonian system should not be integrable. The reasons for this belief can be traced back at least to the early non-integrability results of Bruns and Poincar\'e in celestial mechanics~\cite{Whit} and to the realization that the integrals of motion are related to symmetries of the dynamical system. Making this intuition precise, however, is not easy. Indeed, the fact that generic Hamiltonian systems on the cotangent bundle $T^*M$ of a compact $d$-dimensional manifold~$M$ (or on a compact symplectic manifold) are not completely integrable in the sense of Liouville  was not established until the work of Markus and Meyer~\cite{MM74,MM80} in the 1970s. The gist of their proof is to show that generic Hamiltonian systems on~$T^*M$ do not admit nondegenerate action-angle coordinates anywhere on~$M$ because, by a deep transversality argument due to Robinson~\cite{Ro70}, they only have a countable number of degenerate periodic trajectories. More recently, Bessa, Ferreira, Rocha and Varandas~\cite{Bessa} have proved that, on any compact symplectic manifold, there exists a $C^2$-generic set of Hamiltonians  whose dynamics is topologically mixing on a full measure set of energy levels, which implies that the Hamiltonian system does not have any continuous first integrals other than the Hamiltonian.

As emphasized in~\cite{MM74}, these arguments do not carry over the important case of natural Hamiltonian systems, that is, functions on~$T^*M$ of the form
\begin{equation}\label{defHVintro}
H_V(q,p):=\frac12|p|^2_g + V(q)\,,
\end{equation}
where the kinetic term is given by a fixed Riemannian metric~$g$ on~$M$ and where the potential~$V$ is a smooth function on~$M$. The question of whether the Hamiltonian system defined by~\eqref{defHVintro} is non-integrable for a typical (in some sense) potential~$V$ remains wide open. The reason is that, as the kinetic part is fixed in this setting and one is only allowed to perturb the potential, which is independent of the momenta, transversality arguments cannot be used effectively.

Our objective in this paper is to establish a non-integrability result for natural Hamiltonian systems. Our approach is based on two main ideas. Firstly, since transversality does not seem to be well suited to deal with restricted classes of Hamiltonian dynamical systems, e.g.\ those with a fixed kinetic term as in~\eqref{defHVintro}, our approach will not rely on transversality techniques but on probabilistic methods. Thus, instead of statements that hold for a residual set of Hamiltonian systems, we will consider potentials that depend on a large number of independent, identically distributed  Gaussian random variables and prove statements that hold with a probability tending to~1 as said number of Gaussian variables tends to infinity. The second idea is that, instead of studying whether action-angle variables can exist or not, our goal is to show that typical natural Hamiltonian systems exhibit chaotic behavior (horseshoes). One this has been established, we infer the non-integrability property that the corresponding dynamical systems are not completely integrable with {nondegenerate} first integrals.

We recall that a Hamiltonian system on a $2d$-dimensional symplectic manifold is said to be {\em completely integrable}\/ if there exist $d$~smooth first integrals $I_1,\dots , I_d$ in involution that are functionally independent (i.e., the support of the $d$-form $dI_1\wedge \cdots \wedge d I_d$ is the whole manifold). We refer to~\cite[Definition~2.1]{Pater} for the general definition of when these first integrals are {\em nondegenerate}\/; for the purposes of this Introduction, it suffices to know that, in particular, they are nondegenerate when $I_1,\dots, I_d$ are Morse--Bott functions.

It is worth mentioning that Markus and Meyer did succeed in proving~\cite{MM74} that a natural Hamiltonian system of the form~\eqref{defHVintro} is not ergodic for a generic potential~$V$. This readily follows from their proof that, generically, there is a set of ergodic invariant tori of positive volume. We will also include a probabilistic analog of this result. Thus the picture which emerges from our result is that, typically, a random natural Hamiltonian system exhibits many chaotic regions coexisting with a positive-volume set of invariant tori.

Let us now introduce some definitions so we can rigorously state our main results. For concreteness, in this Introduction (and in most of the paper) we will restrict ourselves to the simplest case where $M$ is the $d$-dimensional torus $\TT^d:=(\RR/2\pi\ZZ)^d$ with the canonical flat metric. However, in Section~\ref{S.manifolds} we will consider the case of random Hamiltonian systems on the cotangent bundle of an arbitrary compact $d$-manifold. The definition of the random potentials is more cumbersome in this setting, but the same kind of arguments yield analogous, if slightly weaker, results. Details are given in Theorem~\ref{T2}.

Therefore, restricting our attention to the torus~$\TT^d$, we shall consider a trigonometric ensemble of Gaussian random potentials $\{V^L\}_{L=1}^\infty$, which we parametrize in terms of the degree~$L$ of
the trigonometric polynomial (see e.g.~\cite{Sodin}).
Specifically, let $\{a_n:n\in\ZZ^d\backslash\{0\}\}$ be complex-valued random variables whose real and
imaginary parts are centered normal variables {with variance 1/2} and which
are independent but for the constraint that $a_n=\overline{a_{-n}}$.  For $L\geq1$, we then define a random trigonometric potential of order~$L$ on~$\TT^d$ as
\begin{equation}\label{VL}
V^L(q):= \frac{1}{\sqrt{d_L}}\sum_{0<|n|_{\infty}\leq  L} {a_n} e^{in\cdot q}\,.
\end{equation}
Here $  |n|_{\infty}:=
  \max_{1\leq j\leq d}|n_j|$
denotes the largest component of the integer
vector $n\in\ZZ^d$ and
$$d_L\coloneqq\#\{n\in \ZZ^d:0<|n|_{\infty}\leq  L\}=(2L+1)^d-1$$
is a normalization constant. One should note that the constraint $a_n=\overline{a_{-n}}$ ensures that~$V^L$ is real valued, and that we have
removed the zero mode for convenience because adding a constant to the potential
does not change the associated Hamiltonian vector field.

To introduce the random natural Hamiltonian system defined by the above
potential, let us take the canonical flat metric on~$\TT^d$ and
consider, for each~$L\geq1$, the Hamiltonian function on~$\TT^d\times\RR^d$
\begin{equation}\label{pot_rando}
H^L(q,p):=\frac12|p|^2+ V^L(q)\,.
\end{equation}
The corresponding Hamiltonian vector field is
denoted by $X^L$.

For any integer~$N\geq1$, we will define an {\em $N$-grid}\/ on the torus
as the collection
\[
\cG^N:=\big\{Q_k^N: k\in\ZZ^d \cap [-N,N-1]^d\big\}
\]
of the $(2N)^d$ disjoint cubes of side $\pi/N$ given by
  \begin{equation}\label{eq:def Q}
  	Q^N_k:=\bigg(\frac{\pi k_1}N, \frac{\pi (k_1+1)}N\bigg)
    \times \cdots\times \bigg(\frac{\pi k_d}N, \frac{\pi (k_d+1)}N\bigg)\,.
  \end{equation}
Thus, for each positive integer~$N$,
\[
\bigcup_{Q\in \cG^N} \overline{Q}= \TT^d\,.
\]

Our main result then asserts that, with a probability that tends to~1 as $L\to\infty$, the random Hamiltonian vector field $X^L$ exhibits both chaos and a positive-volume set of invariant tori, so it is neither integrable with nondegenerate first integrals nor ergodic. In the statement, by a {\em horseshoe}\/ we mean, as usual, a compact hyperbolic invariant set on which the time-$T$ flow of the Hamiltonian field is conjugate to the shift map, for some $T>0$. In particular, this implies that the topological entropy of the field is positive.

\begin{theorem}\label{T1}
For any fixed $N\geq1$, the probability that the Gaussian random
Hamiltonian vector field $X^L$ has a horseshoe and a set of $d$-dimensional ergodic invariant tori of positive volume (of order $L^{-d}$) contained in the cotangent bundle of each of the cubes of the
$N$-grid~$\cG^N$ tends to~$1$ as $L\to\infty$. In both cases, the horseshoes and invariant tori we count are on the energy levels of a fixed nonempty interval $(h_1,h_2)$ which does not depend on $L$.

Furthermore, for $L\gg1$, the expected number of horseshoes and of the inner volume of $d$-dimensional ergodic invariant tori of~$X^L$ in this fixed energy levels are respectively greater than $\nu_* L^d$ and~$\nu_*$, where $\nu_*$ is a positive constant independent of~$L$.
\end{theorem}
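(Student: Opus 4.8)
The plan is to reduce everything to a \emph{local} analysis, carried out near many points, after rescaling the potential to its natural oscillation length~$1/L$. Fix $q_*\in\TT^d$ and set $W^L(x):=V^L(q_*+x/L)$ for $x$ in a fixed large ball $B_R\subset\RR^d$. Since $V^L$ is centered Gaussian with covariance $\kappa_L(q-q')=d_L^{-1}\sum_{0<|n|_\infty\le L}e^{in\cdot(q-q')}$, the field $W^L$ is stationary on $B_R$ with covariance $\kappa_L\big((x-x')/L\big)$, and a Riemann-sum computation gives $\kappa_L\big((x-x')/L\big)\to 2^{-d}\int_{[-1,1]^d}e^{i\xi\cdot(x-x')}\,d\xi$; together with the corresponding convergence of all derivatives, this shows that the law of $W^L$ converges, in $C^k(B_R)$ for every~$k$, to that of the stationary Gaussian field $W^\infty$ on $\RR^d$ band-limited to $[-1,1]^d$. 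The change of variables $q=q_*+x/L$, $p\mapsto p$ — symplectic up to the constant factor~$1/L$ — followed by a time rescaling by~$L$ conjugates $X^L$, restricted to $\{|q-q_*|<R/L\}$ on any energy level $\{H^L=h\}$, to the flow of the natural Hamiltonian $\tfrac12|p|^2+W^L(x)$ on $\{\tfrac12|p|^2+W^L=h\}$. Consequently any invariant set of this rescaled flow at energy~$h$ — a horseshoe, an invariant torus — produces one for~$X^L$ at the same energy, so it suffices to exhibit such features, for the rescaled system, at energies in a fixed interval $(h_1,h_2)$.

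The dynamical input is two robustness facts about natural Hamiltonian systems $\tfrac12|p|^2+V(x)$ on~$\RR^d$, $d\ge2$. First: there is a nonempty $C^k$-open set of potentials (for some finite~$k$) whose flow has a hyperbolic periodic orbit with a transverse homoclinic orbit on an energy level in a fixed bounded interval, hence a horseshoe by the Smale--Birkhoff theorem applied to a Poincar\'e section; such potentials can be produced by a Melnikov-type splitting argument — for instance a double-well potential in one coordinate weakly coupled to harmonic oscillators in the remaining ones — and the horseshoe, being built from a transverse intersection of invariant manifolds, survives $C^k$-small perturbations of~$V$, its energy depending continuously on~$V$. Second: there is a nonempty $C^\ell$-open set of potentials possessing a nondegenerate local minimum whose Birkhoff normal form satisfies the standard KAM nonresonance and isoenergetic-nondegeneracy (twist) conditions, so that every energy shell slightly above the minimum is filled, to a fixed positive fraction of its volume, by Diophantine — hence ergodic — invariant tori; this too persists under $C^\ell$-small perturbations, and after undoing the rescaling these tori lie in an $O(1/L)$-ball about the corresponding equilibrium of~$X^L$, so the volume they occupy is $\gtrsim L^{-d}$.

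To carry these over to~$V^L$, note that the reproducing kernel Hilbert space of~$W^\infty$ is the Paley--Wiener space of $L^2$ functions band-limited to $[-1,1]^d$, and that its restriction to~$B_R$ is \emph{dense} in $C^m(B_R)$: any finite-order distribution supported in $\overline{B_R}$ that annihilates every such function has entire Fourier transform vanishing on $[-1,1]^d$, hence vanishing identically. Thus the topological support of~$W^\infty$ in $C^m(B_R)$ is the entire space, so $W^\infty$ lies in either of the open sets of the previous step with positive probability; combining this with the convergence in law $W^L\Rightarrow W^\infty$ and the openness of the events, we conclude that there is a fixed interval $(h_1,h_2)$ and constants $\de,p_0>0$ such that, for every $q_*\in\TT^d$ and all $L\gg1$, with probability at least~$p_0$ the vector field~$X^L$ has a horseshoe — respectively a set of ergodic invariant tori of volume $\gtrsim L^{-d}$ — on an energy level in $(h_1,h_2)$, localized in the $\de/L$-ball about~$q_*$.

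Finally one amplifies over scales. Inside each cube $Q\in\cG^N$ one places $\asymp(L/\de)^d$ centers at mutual distance $\ge\de/L$; the polynomial decay $|\kappa_L(r)|\le\prod_{j=1}^d\big(2L\,|\sin(r_j/2)|\big)^{-1}$ of the covariance makes the corresponding ``success'' events asymptotically uncorrelated (via standard Gaussian comparison estimates), so a second-moment/Chebyshev bound shows that the probability that \emph{no} center succeeds tends to~$0$; a union bound over the $(2N)^d$ cubes gives the ``probability $\to1$'' assertion, for horseshoes and for tori alike. Summing the per-center lower bounds over a collection of $\asymp L^d$ disjoint windows in~$\TT^d$ — while restricting throughout to a fixed compact, positive-probability family of ``good'' rescaled jets on which the twist constant, the homoclinic transversality and the relevant energies stay bounded away from degeneracy — yields $\bE[\text{number of horseshoes}]\gtrsim L^d$ and $\bE[\text{inner volume of invariant tori}]\gtrsim L^d\cdot L^{-d}\asymp1$, which are the asserted expectation bounds. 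The main obstacle is the first dynamical input: exhibiting an explicit, $C^k$-robust family of potentials whose natural Hamiltonian flow has a transverse homoclinic connection at a controllable energy — that is, performing the Melnikov computation (and checking that the splitting function has simple zeros) uniformly enough that it survives the passage $W^\infty\rightsquigarrow W^L$ and covers all dimensions $d\ge2$; the probabilistic ingredients, although technical, are by now standard in the theory of random fields.
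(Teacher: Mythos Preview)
Your overall architecture --- rescale to the natural $1/L$ length scale, identify a limiting band-limited Gaussian field on~$\RR^d$, exhibit an explicit potential whose natural flow has a robust horseshoe and KAM tori, use density of the support to get positive probability, then amplify --- matches the paper. The explicit dynamical construction and the weak convergence of the rescaled law are handled essentially as you describe (the paper uses a rotor coupled to $d-1$ pendulums and an explicit Melnikov computation, but your double-well-plus-oscillators sketch is in the same spirit).

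The substantive divergence, and the place where your argument has a genuine gap, is the \emph{amplification step}. You pack $\asymp(L/\de)^d$ centers into each cube, invoke the Dirichlet-kernel bound $|\ka_L(r)|\le\prod_j\bigl(2L|\sin(r_j/2)|\bigr)^{-1}$, and assert that ``standard Gaussian comparison estimates'' make the success events asymptotically uncorrelated so that a second-moment bound applies. Two problems. First, for neighbouring centers at distance $\asymp\de/L$ the bound gives only $|\ka_L|\lesssim\de^{-d}$, which is $O(1)$, not small; the number of such close pairs is comparable to the total number of centers, so you cannot simply sum. Second, and more seriously, the ``success'' events are not threshold events for pointwise values of the field but complicated dynamical conditions (existence of a transverse homoclinic, persistence of a Cantor family of Diophantine tori) depending on the full $C^r$-jet on a ball; there is no off-the-shelf Gaussian comparison lemma that converts a covariance bound on the field into a covariance bound on such indicators, and the phrase ``standard'' is doing a lot of unearned work here.

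The paper sidesteps this entirely. It never argues decorrelation between windows of~$V^L$. Instead it works on the \emph{limiting} field~$W$ on~$\RR^d$, which is genuinely translation-ergodic because its spectral measure has no atoms. By a Nazarov--Sodin-type sandwich estimate combined with the ergodic theorem, the probability that $W$ exhibits a horseshoe and the required tori somewhere in a large ball $B_{R_0}$ can be made $>1-\de/2$ by taking $R_0$ large. Weak convergence and lower semicontinuity of the horseshoe/torus count then give the same bound for $V^{L,q}$ on a single ball of radius $\asymp R_0/L$, uniformly in~$q$, once $L$ is large. So \emph{one} center per cube already succeeds with probability $>1-\de$, and a union bound over the $(2N)^d$ cubes finishes. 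The expectation bound likewise comes from summing the single-window lower bound over $\asymp L^d$ disjoint balls, with no independence needed since expectation is linear.

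A smaller point: your KAM tori are produced near a nondegenerate minimum of the potential, hence on energy shells just above the (random) value $V_{\min}$. Pinning these to a \emph{fixed} interval $(h_1,h_2)$ independent of the realization and of~$L$ requires an extra argument that you only gesture at. The paper's explicit rotor--pendulum system yields tori (and horseshoes) directly on a deterministic energy range, which transfers unchanged through the rescaling.
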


For some insight into the expected number of horseshoes we compute in the energy interval $(h_1,h_2)$, note that, although the potential $V^L$ is obviously smooth because it is a trigonometric polynomial, typical frequencies in~$V^L$ are of order $O(L)$. Therefore, it makes sense that there are horseshoes and invariant tori of diameter $O(L^{-1})$, so the fact that there typically are $O(L^d)$ compact chaotic invariant sets is natural.

A straightforward corollary of this result is that, with a probability that tends to~1 as $L\to\infty$, the vector field~$X^L$ is not completely integrable with nondegenerate first integrals. When $d=2$, we also infer that the system does not admit any analytic first integral other than the Hamiltonian:

\begin{corollary}\label{cor}
With a probability tending to~$1$ as $L\to\infty$, the random Hamiltonian vector field $X^L$ is not completely integrable with nondegenerate first integrals. Moreover, when $d=2$, $X^L$ does not admit a nontrivial analytic first integral (that is, a function $I\in C^\omega(\TT^2\times\RR^2)$ invariant under the flow of~$X^L$ and such that the support of the $2$-form $dI\wedge dH^L$ is the whole space $\TT^2\times\RR^2$).
\end{corollary}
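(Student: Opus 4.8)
The plan is to deduce both assertions directly from Theorem~\ref{T1}, which furnishes, with probability tending to~$1$ as $L\to\infty$, a horseshoe $\Lambda\subset\TT^d\times\RR^d$ for the flow of $X^L$ on the energy level $\{H^L=h\}$ for some $h\in(h_1,h_2)$. For the first statement I would argue by contradiction. Suppose the system were completely integrable with nondegenerate first integrals $I_1,\dots,I_d$, one of which we may take to be $H^L$ itself. On the one hand, the existence of a horseshoe implies that the topological entropy of the time-$T$ map is positive. On the other hand, a Hamiltonian system that is completely integrable with nondegenerate (in particular Morse--Bott; see \cite[Definition~2.1]{Pater}) first integrals has a phase space foliated, away from a measure-zero set, by invariant Liouville tori on which the motion is quasi-periodic, and the singular leaves form a stratified set of lower dimension; by the work on topological entropy of integrable systems (e.g.\ Paternain and collaborators, \cite{Pater}), such a flow has zero topological entropy. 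This contradicts the positivity of entropy forced by the horseshoe, so with probability tending to~$1$ the vector field $X^L$ cannot be completely integrable with nondegenerate first integrals.

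For the case $d=2$, I would use the stronger structural information carried by a horseshoe together with an analyticity argument. A nontrivial analytic first integral $I\in C^\omega(\TT^2\times\RR^2)$ with $dI\wedge dH^L$ supported on the whole space is, on each regular energy level, a nonconstant analytic function constant along the flow. Restrict $I$ to the three-dimensional energy surface $\Sigma_h:=\{H^L=h\}$ containing the horseshoe~$\Lambda$. On a horseshoe the flow has a dense orbit (indeed the shift is transitive on the relevant subshift), so $I$ must be constant on the closure of that orbit, which contains $\Lambda$; moreover, by the shadowing and density of periodic orbits in $\Lambda$, the value of $I$ is pinned on an infinite set that accumulates within $\Sigma_h$. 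The key point is then that $\Lambda$ is not contained in any proper analytic subvariety of $\Sigma_h$: the stable/unstable laminations of a horseshoe, together with the abundance of transverse homoclinic points, force $\Lambda$ to be ``spread out'' enough that any analytic function vanishing on $\Lambda$ (after subtracting the constant) must vanish on an open subset of $\Sigma_h$, hence on all of $\Sigma_h$ by the identity principle for the connected analytic manifold $\Sigma_h$. Since this holds for $h$ in the open interval $(h_1,h_2)$ and $I$ is analytic, $I$ is locally constant on an open subset of $\TT^2\times\RR^2$, whence constant there and, again by analyticity, globally a function of $H^L$ alone. This contradicts the nondegeneracy hypothesis $\supp(dI\wedge dH^L)=\TT^2\times\RR^2$.

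The main obstacle I anticipate is the ``spreading'' step in the $d=2$ argument: making rigorous the claim that a horseshoe cannot lie inside a proper analytic subvariety of the three-manifold $\Sigma_h$. The cleanest route is to invoke the classical fact that a transverse homoclinic intersection obstructs the existence of a real-analytic first integral on the energy surface — this is the analytic (Poincaré-type) non-integrability mechanism, and it can be extracted from the existence of the horseshoe because a horseshoe arises precisely from (and produces) transverse homoclinic points for a hyperbolic periodic orbit lying on $\Sigma_h$. Concretely, one takes the hyperbolic periodic orbit $\gamma\subset\Lambda$, notes that its stable and unstable manifolds within $\Sigma_h$ intersect transversally, and then runs the standard argument: an analytic first integral would be constant on $\gamma$, hence — by continuity and invariance — constant on $W^s(\gamma)\cup W^u(\gamma)$, and the transverse intersection of these analytic Lagrangian-type submanifolds forces the integral's differential to vanish on a set with nonempty interior, contradicting $\supp(dI\wedge dH^L)$ being everything. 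I would cite the relevant references (\cite{MM74} for the natural-Hamiltonian context and the standard homoclinic non-integrability literature) rather than reproduce this argument in full, since it is by now classical; the novelty of the corollary lies entirely in the input provided by Theorem~\ref{T1}.
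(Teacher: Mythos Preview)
Your approach is essentially the same as the paper's. For the first assertion you argue exactly as the paper does: a horseshoe forces positive topological entropy, while complete integrability with nondegenerate first integrals forces zero entropy by~\cite{Pater}. For the $d=2$ assertion, the paper dispenses with the matter in one line by invoking Moser's theorem \cite[Corollary~4.8.5]{GH}: any Hamiltonian system with a horseshoe admits no analytic first integral independent of the Hamiltonian.

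Your attempted direct argument for $d=2$ should be dropped rather than repaired. The ``spreading'' claim---that a horseshoe cannot lie inside a proper analytic subvariety of the three-dimensional energy surface---is not obviously true: the horseshoe in~$\Sigma_h$ is locally a Cantor set times an interval, hence a one-dimensional object, and there is no a priori reason such a set cannot sit inside the zero locus of a nontrivial analytic function on a three-manifold. Likewise, in your sketch of the homoclinic argument, the step ``the transverse intersection forces the integral's differential to vanish on a set with nonempty interior'' is not immediate: the transverse homoclinic points give you $dI=0$ only on a countable (or Cantor-type) set, and passing from that to an open set is precisely the delicate part of Moser's proof, which uses the $\lambda$-lemma and analyticity in a nontrivial way. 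Since you already plan to cite the classical result rather than reproduce it, simply cite \cite[Corollary~4.8.5]{GH} (this is Moser's theorem, not~\cite{MM74}) and omit the heuristic sketch.
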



The article is organized as follows. In Section~\ref{S.Rd} we define a class of random potentials on $\RR^d$ and prove some useful properties of the probability measure they induce on the space of functions $C^r(\RR^d)$. The dynamics on $\RR^{2d}$ of the corresponding random Hamiltonian fields is studied in Section~\ref{S.dynamicsRd}, where we show that these fields exhibit horseshoes and a positive volume set of ergodic invariant tori almost surely. The connection between the random potentials on $\TT^d$ and the aforementioned random potentials on $\RR^d$ is established in Section~\ref{S.convergence}. Finally, in Section~\ref{S.conclusion} we prove Theorem~\ref{T1} and Corollary~\ref{cor}, while Section~\ref{S.manifolds} is devoted to an extension of our results to general compact manifolds.

\section{Definition of random potentials on~${\RR^d}$}
\label{S.Rd}

For $n\in\ZZ^d$, let $b_n$ be (real-valued) independent standard normal variables. It is then clear that
\begin{equation}\label{Eb}
	\bE(b_n\,b_{n'})=\de_{n,n'}
\end{equation}
for all $n,n'\in\ZZ^d$.

Consider the cube
\[
  \cQ:=[{-\La,\La}]^d\subset\RR^d\,,
\]
where
$\La$ is a large positive constant that will be fixed later
on. Note that
\begin{equation}\label{E.phin}
\phi_n(\xi):=e^{i \pi n\cdot  \xi/\La}\, 1_{\cQ}(\xi)\,,\qquad n\in\ZZ^d\,,
\end{equation}
is an orthonormal basis of $L^2(\cQ,(2\La)^{-d}\,d\xi)$, where $1_{\cQ}$ is the indicator function of the set~$\cQ$ and $(2\La)^{-d}\,d\xi$ is the normalized Lebesgue measure on~$\cQ$.

We now define the Gaussian random
function on~$\RR^d$
\begin{equation}\label{defW}
  W(x):= \sum_{n\in\ZZ^d} {b_n}\,\hphi_n(x)\,,
\end{equation}
where the real-valued function
\[
\hphi_n(x):=(2\La)^{-d}\,\int_{\cQ} e^{ix\cdot\xi} \,\phi_n(\xi)\,  d\xi
\]
is the Fourier transform of~$\phi_n$. Note that we have added a nonstandard normalization factor in the definition for future convenience.

It is standard~\cite{NS16,Sodin} that the {\em covariance kernel}\/ of the Gaussian
random function~$W$, defined as
\[
  \ka_W(x,x') := \bE[W(x)\,W(x')]\,,
\]
is $\ka_W(x,x')=\varkappa_W(x-x')$, where
\begin{equation}\label{eq:kernel}
	\varkappa_W(x):=\prod_{j=1}^d \frac{\sin \La x_j}{\La x_j}\,,
\end{equation}
denotes the Dirichlet kernel. The {\em spectral measure}\/ of~$W$ is therefore the normalized uniform measure on the cube~$\cQ$, that is,
\begin{equation}\label{E.KWint}
\varkappa_W(x)= (2\La)^{-d}\int_{\cQ}e^{ix\cdot\xi}\, d\xi\,.
\end{equation}

The following proposition shows that $W$ is, almost surely, an analytic, polynomially bounded function on $\RR^d$:

\begin{proposition}\label{P.conv}
With probability~$1$, $W$ is an analytic function on~$\RR^d$ and
\[
(1+|x|^2)^{-\rho/2}\, W(x)
\]
is bounded, where $\rho$ is any fixed constant larger than~$d$. Furthermore, for any~$r\geq0$, the
series~\eqref{defW} converges locally in~$C^r$ with probability~$1$.
\end{proposition}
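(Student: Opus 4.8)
The plan is to prove the three assertions---analyticity, the polynomial growth bound, and $C^r_{\mathrm{loc}}$ convergence---in that logical order, the organizing remark being that each $\hphi_n$ is the restriction to $\RR^d$ of an entire function on $\CC^d$. I would begin by computing $\hphi_n$ explicitly: carrying out the defining integral coordinate by coordinate gives
\[
\hphi_n(x)=\prod_{j=1}^d \operatorname{sinc}(\La x_j+\pi n_j),\qquad \operatorname{sinc}(u):=\frac{\sin u}{u},
\]
which manifestly extends to an entire function of $z\in\CC^d$ (the apparent poles of the sinc factors are removable). The one estimate that drives everything is that
\[
K(z):=\sum_{n\in\ZZ^d}|\hphi_n(z)|^2
\]
converges \emph{locally uniformly} on $\CC^d$. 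This is elementary from the product structure: for $|w|\le R$ one has $|\operatorname{sinc}(\La w+\pi k)|\le C_R(1+|k|)^{-1}$ for every $k\in\ZZ$, so $\sum_{k\in\ZZ}|\operatorname{sinc}(\La w+\pi k)|^2$ is bounded uniformly on $\{|w|\le R\}$, and $K(z)=\prod_{j}\bigl(\sum_k|\operatorname{sinc}(\La z_j+\pi k)|^2\bigr)$. For real $z=x$ one has in fact $K(x)=\varkappa_W(0)=1$ by Parseval, consistent with \eqref{eq:kernel}.

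For the convergence and analyticity, fix $z\in\CC^d$ and consider the partial sums $W_M(z):=\sum_{|n|_\infty\le M}b_n\hphi_n(z)$. With respect to the filtration $\cF_M:=\sigma(b_n:|n|_\infty\le M)$ these form an $L^2$-bounded (complex) martingale, since $\bE|W_M(z)|^2=\sum_{|n|_\infty\le M}|\hphi_n(z)|^2\le K(z)<\infty$; hence $W_M(z)$ converges almost surely, and intersecting over a countable dense set of points shows that, with probability one, $W_M$ converges pointwise on a dense subset of $\CC^d$. To upgrade this to locally uniform convergence I would show that the partial sums are almost surely locally bounded uniformly in $M$: Doob's $L^2$ maximal inequality gives $\bE[\sup_M|W_M(z)|^2]\le 4K(z)$ for every $z$, while the subharmonicity of $|W_M|^2$ (as $W_M$ is holomorphic) yields $\sup_{|z|\le R}|W_M(z)|^2\le c_d\int_{|w|\le R+1}|W_M(w)|^2\,dw$ with $c_d$ depending only on $d$; combining the two by Tonelli gives $\bE\bigl[\sup_M\sup_{|z|\le R}|W_M(z)|^2\bigr]\le 4c_d\int_{|w|\le R+1}K(w)\,dw<\infty$. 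By Vitali's theorem (a locally bounded sequence of holomorphic functions that converges on a dense set converges locally uniformly), almost surely $W_M\to W$ locally uniformly on $\CC^d$, so $W$ is entire; its restriction to $\RR^d$ is real-analytic and real-valued (each $\hphi_n$ and $b_n$ being real). Finally, the Weierstrass convergence theorem gives locally uniform convergence of all derivatives of $W_M$ on $\CC^d$, which restricted to $\RR^d$ is precisely the asserted $C^r_{\mathrm{loc}}$ convergence of \eqref{defW} for every $r\ge0$.

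It remains to prove the growth bound, and here one may exploit that $W$ is now known to be almost surely a continuous, centered, stationary Gaussian field on $\RR^d$ with $\bE[W(x)^2]=\varkappa_W(0)=1$. By Fernique's theorem $m:=\bE[\sup_{|x|\le1}|W(x)|]<\infty$, and by stationarity the same holds over any unit ball. Covering $\{|x|\le R\}$ by $O(R^d)$ unit balls and applying the Borell--TIS concentration inequality on each of them gives $\bP\bigl(\sup_{|x|\le R}|W(x)|>m+t\bigr)\le C_dR^de^{-t^2/2}$; taking $t\asymp\sqrt{\log R}$, a Borel--Cantelli argument along $R=2^k$ together with the monotonicity of $R\mapsto\sup_{|x|\le R}|W|$ shows that, almost surely, $\sup_{|x|\le R}|W(x)|=O\bigl(\sqrt{\log(2+R)}\bigr)$. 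Since this is $o\bigl((1+|x|^2)^{\rho/2}\bigr)$ as $|x|\to\infty$ for every $\rho>0$, in particular for $\rho>d$, the function $(1+|x|^2)^{-\rho/2}W(x)$ is almost surely bounded.

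I expect the argument to be soft throughout; in fact the proposition is far from sharp, since the method yields logarithmic rather than polynomial growth. The one point calling for a little care is the almost sure uniform-in-$M$ local boundedness of the partial sums, where a maximal inequality in the summation index must be married to a sup-over-$z$ bound coming from holomorphy; an alternative that bypasses Vitali is to run a Gaussian chaining/concentration estimate directly on the increments $W_M-W_{M'}$, whose pointwise variance $\sum_{M'<|n|_\infty\le M}|\hphi_n(z)|^2$ tends to $0$ locally uniformly on $\CC^d$. Checking the estimate for $K$ at complex arguments is the only other item that needs attention, and it is routine.
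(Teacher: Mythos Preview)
Your argument is correct, and it differs substantially from the paper's. The paper first invokes Kolmogorov's theorem (via smoothness of the covariance kernel) to get a.s.\ $C^\infty$ sample paths, then shows the partial sums are Cauchy in $L^2$ for each derivative at each point and applies the It\^o--Nisio theorem to obtain local $C^r$ convergence; analyticity is deduced at the very end by noting that the Fourier transform of~$W$ is supported in~$\cQ$ and invoking Paley--Wiener. You instead exploit the explicit entire extension of each $\hphi_n$ from the outset: a martingale/Doob bound combined with subharmonicity of $|W_M|^2$ gives a.s.\ local uniform boundedness of the partial sums on $\CC^d$, after which Vitali and Weierstrass deliver locally uniform convergence of all derivatives in one stroke, so analyticity and the $C^r$ statement fall out together. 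For the growth bound the paper estimates $\bE\|W\|_{H^s(B_R)}^2\lesssim R^{2d}$ and uses a dyadic Sobolev argument to show $\langle x\rangle^{-\rho}W\in H^s(\RR^d)$ a.s.\ for $\rho>d$; your Borell--TIS/Borel--Cantelli route is sharper, yielding the $O(\sqrt{\log R})$ envelope (valid for every $\rho>0$, not just $\rho>d$). The paper's approach is a bit more hands-on and avoids the Gaussian concentration machinery, while yours is more unified and gives a stronger conclusion; both are perfectly valid.
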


\begin{proof}
As the covariance kernel $\varkappa(x)$ is non-negative definite in the sense that it is the Fourier transform of a nonnegative Borel measure, and
\[
\sup_{x\in\RR^d} |\pd^\al \varkappa_W(x)|<\infty
\]
for any multiindex~$\kappa$, a theorem of Kolmogorov~\cite[A.9]{NS16} ensures that the random field $W$ is of class $C^\infty$ almost surely.

Our next goal is to show that the series~\eqref{defW} converges locally uniformly in~$C^r$. Replacing the variables $(x,\xi)$ by $(\La x/\pi,\pi \xi/\La)$ if necessary, we can assume without any loss of generality that $\La=\pi$. The functions $\phi_n$ and $\hphi_n$ can then be written as
\[
\phi_n(\xi)= \prod_{j=1}^d \Phi_{n_j}(\xi_j)\,,
\]
where $\Phi_m\in L^2(\RR)$ is defined for $m\in\ZZ$ as
\[
\Phi_m(\eta):= e^{im\eta}\,1_{[-\pi,\pi]}(\eta)\,.
\]
Let us now observe that the Fourier transform of~$\Phi_m$ can be written in terms of the sinc function
\[
f(t):= \frac{\sin \pi t}{\pi t}
\]
as
\[
\hPhi_m(y)=  f(m+y)\,,
\]
which immediately yields the bound
\begin{equation}\label{E.pdyphin}
|\pd_y^k \hPhi_m(y)|\leq C_k [1 +(m+y)^2]^{-1/2}\,.
\end{equation}
Here the constant $C_k$ is independent of~$m$.
Plugging this formula into the expression for $\hphi_n$, we obtain that
\[
\hphi_n(x)=\prod_{j=1}^d f(n_j+x_j)\,,
\]
so
\begin{equation}\label{E.boundhPhi}
	|\pd^\al \hphi_n(x)|\lesssim \prod_{j=1}^d[1+ (n_j+x_j)^2]^{-1/2}
\end{equation}
for any multiindex~$\al$. The implicit constants only depend on~$\al$ and~$d$.

Consider now the partial sums
\[
S_N(x):= \sum\limits_{\substack{|n_j|\leq N \\ 1\leq j\leq d}} {b_n}\,\hphi_n(x)\,.
\]
For each fixed $x\in\RR^d$ and any fixed multiindex~$\al$, one can then use these bounds and the identity~\eqref{Eb} to write, for each $1\ll N< N'$,
\begin{align*}
\bE\big[ \pd^\al S_{N'}(x)-\pd^\al S_N(x)\big]^2 &= \bE  \left[\sum\limits_{\substack{N<|n_j|\leq N' \\ 1\leq j\leq d}}  b_n \pd^\al \hphi_n(x)\right]^2= \sum\limits_{\substack{N<|n_j|\leq N' \\ 1\leq j\leq d}}  [\pd^\al\hphi_n(x)]^2 \\
&\lesssim \prod_{j=1}^d \sum\limits_{N<|n_j|\leq N' } n_j^{-2}\lesssim  N^{-d}\,.
\end{align*}
Here we have used that, for a fixed~$x$ and any $n_j$ large enough,
\[
1 +(n_j+x)^2\geq C_x n_j^2
\]
with a constant that depends on~$x$ but not on~$n_j$. Thus $\pd^\al S_N(x)$ is a Cauchy sequence and converges to $\pd^\al W(x)$ in quadratic mean (and therefore in probability). The Ito--Nisio theorem~\cite[A.6]{NS16} then ensures that the random field $S_N$ converges to $W$ locally in~$C^r$ with probability~1, for any $r\geq 0$.

Finally we need to estimate the growth of $W$ at infinity. As the series converges locally in~$C^r$, let us now fix any constant $R>0$ and write the expectation value of the $L^2$~norm of~$\pd^\al W$ on the ball~$B_R$ as
\begin{align*}
\bE\|\pd^\al W\|_{L^2(B_R)}^2 &= \sum_{n,n'\in\ZZ^d} \bE(b_n {b_{n'}}) \int_{B_R} \pd^\al\hphi_n(x)\,\pd^\al \hphi_{n'}(x)\, dx\\
&=\sum_{n\in\ZZ^d} \int_{B_R} |\pd^\al\hphi_n(x)|^2\, dx\,,
\end{align*}
where to pass to the second line we have used the identity~\eqref{Eb}

The bound~\eqref{E.boundhPhi} immediately yields that
\[
\int_{B_R} |\pd^\al\hphi_n(x)|^2\, dx \lesssim \prod_{j=1}^d\int_{-R}^R\frac{dx_j}{1+(n_j+x_j)^2}\lesssim R^d\prod_{j=1}^d m(R,n_j)\,,
\]
where the function $m(R,\nu)$ is defined as
\[
m(R,\nu):=\left\{
            \begin{array}{ll}
              1\,, &  |\nu|\leq 2R\,,\\
              \nu^{-2}\,, & |\nu|>2R\,.
            \end{array}
          \right.
\]
To obtain the last inequality it suffices to note that $x\in B_R$ and, if $|n_j|> 2R$,
\[
|n_j+x_j|>\frac{|n_j|}2\,.
\]
Accordingly, we can write
\begin{align*}
\sum_{n\in \ZZ^d} \int_{B_R} |\pd^\al\hphi_n(x)|^2\, dx	&\lesssim R^d\sum_{n\in\ZZ^d}\prod_{j=1}^d m(R,n_j)\\
	&= R^d \prod_{j=1}^d\Big(\sum_{n_j\in\ZZ}m(R,n_j)\Big)=R^d\Big(\sum_{\nu\in\ZZ}m(R,\nu)\Big)^d\\
&\lesssim R^{2d}\,,
\end{align*}
where we have used the obvious bound
\[
\sum_{\nu\in\ZZ}m(R,\nu)=\sum_{|\nu|\leq 2R}1+\sum_{|\nu|>2R}\nu^{-2}\lesssim R\,.
\]
Therefore, the expectation of the Sobolev norm of~$W$ on~$B_R$ is bounded as
\[
\bE\|W\|_{H^s(B_R)}^2\lesssim R^{2d}
\]
for any $s\geq0$ and any $R>0$ (the implicit constant only depends on $s$).

Moreover, for any constant $\rho>d$, one can use this bound to compute the expectation of the Sobolev norm of $\langle x\rangle^{-\rho}W$ using a dyadic collection of concentric balls as
\begin{align*}
\bE\|\langle x\rangle^{-\rho}W\|_{H^s(\RR^d)}^2 & = \bE\|\langle x\rangle^{-\rho}W\|_{H^s(B_2)}^2 + \sum_{N=1}^\infty \bE\|\langle x\rangle^{-\rho}W\|_{H^s(B_{2^{N+1}}\backslash B_{2^N})}^2\\
&\lesssim \bE\|W\|_{H^s(B_2)}^2+ \sum_{N=1}^\infty 2^{-2N\rho}\,\bE\|W\|_{H^s(B_{2^{N+1}}\backslash B_{2^N})}^2\\
&\lesssim \bE\|W\|_{H^s(B_2)}^2+ \sum_{N=1}^\infty 2^{-2N\rho}\,\bE\|W\|_{H^s(B_{2^{N+1}})}^2\\
&\lesssim 1+ \sum_{N=1}^\infty 2^{-2N\rho} \,2^{2N d}\lesssim1\,.
\end{align*}
By Sobolev's inequality, that is
\[
\|W\|_{C^r(\RR^d)}\lesssim \|W\|_{H^s(\RR^d)}
\]
with $s>r+d/2$, we obtain the desired pointwise bound.

Also, since the Fourier transforms $\widehat S_N$ of each partial sum $S_N$ are distributions supported in~$\cQ$, it is standard that the polynomial bound and the convergence result we have established ensure that $\widehat S_N\to \widehat W$ in the sense of tempered distributions as $N\to\infty$. In particular, the Fourier transform of~$W$ is supported in~$\cQ$ almost surely, so $W$ is an analytic function almost surely by the Payley--Wiener theorem.
\end{proof}

Since the Gaussian field~$W$ is smooth with
probability~1 by Proposition~\ref{P.conv}, it is standard that it defines a Gaussian probability
measure~$\mu_W$ on the space of~$C^r$~functions on~$\RR^d$,
 for any fixed integer~$r$.
This space is endowed with its usual Borel $\si$-algebra~$\fS$,
which is the minimal $\si$-algebra containing the intervals
\[
  I(x,a,b):=
\{w\in C^r(\RR^d): w(x)\in[a,b)\}
\]
for all $x,a,b,\in\RR$ with $a<b$. To spell out the details, let us denote
by~$\Omega$ the sample space of the random variables $b_n$ and show
that the random function~$W$ is a measurable map from~$\Omega$ to~$C^r(\RR^d)$. Since the
$\si$-algebra of~$C^r(\RR^d)$ is generated by point evaluations,
it suffices to show that $W(x)$ is a measurable function
$\Omega\to\RR$ for each~$x\in\RR^d$. But this is obvious because $W(x)$ is
the limit of finite linear combinations
of the random variables~$b_n$, which are of course measurable. In what follows, we will not
mention the $\si$-algebra explicitly to keep the notation simple. Also, in view of later applications, we  henceforth assume that~$r\geq2$.

In the following lemma we establish some basic properties of the probability measure~$\mu_W$. We recall that~$\mu_W$ is said to be {\em translationally invariant}\/ if $\mu_W(\tau_y \cA)=\mu_W(\cA)$
for all measurable sets~$\cA\subset\fS$ and all~$y\in \RR^d$. Here $\tau_y$~denotes
the translation operator on $C^r$~functions, defined as
\[
\tau_y w(x):=
w(x+y)\,.
\]

\begin{lemma}\label{L.ergodic}
The probability measure $\mu_W$ is translationally invariant and ergodic with respect to the action of translations $\{\tau_y:y\in\RR^d\}$.  Furthermore, if~$\Phi$ is an $L^1$~random variable on the probability space $(C^r(\RR^d),\fS,\mu_W)$, then
\[
\lim_{R\to\infty}\Bint_{B_R} \Phi\circ\tau_y\, dy = \bE\Phi
\]
both $\mu_W$-almost surely and in $L^1(C^r(\RR^d),\mu_W)$.
\end{lemma}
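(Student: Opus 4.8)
The strategy is to use that $\mu_W$ is the law of a \emph{stationary} centered Gaussian field and to reduce the statement to two classical facts: the fact that a stationary Gaussian field whose covariance vanishes at infinity is mixing, and Wiener's pointwise ergodic theorem for measure‑preserving actions of $\RR^d$ along balls. \emph{Step 1 (translational invariance).} Since $W$ is centered Gaussian, its law on $C^r(\RR^d)$ is determined by the covariance kernel $\ka_W$, and by \eqref{eq:kernel} this kernel is stationary, $\ka_W(x,x')=\varkappa_W(x-x')$. Hence for each $y\in\RR^d$ the translated field $\tau_y W$ is again a centered Gaussian field with the \emph{same} covariance, so $(\tau_y)_*\mu_W=\mu_W$. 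One should also record that $(y,w)\mapsto\tau_y w$ is jointly measurable: for $w\in C^r$ each evaluation $(y,w)\mapsto(\tau_yw)(x)=w(x+y)$ is continuous, and $\fS$ is generated by such evaluations.

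\emph{Step 2 (ergodicity).} Here the key observation is that, by \eqref{E.KWint}, the spectral measure of $W$ is the normalized Lebesgue measure on the cube $\cQ$, which is absolutely continuous; equivalently, $\varkappa_W(x)=\prod_{j=1}^d\frac{\sin\La x_j}{\La x_j}\to0$ as $|x|\to\infty$. I would then invoke the classical criterion for stationary Gaussian fields: a continuous (in particular absolutely continuous) spectral measure implies ergodicity, and $\varkappa_W\to0$ at infinity implies mixing. If a self‑contained argument is preferred, one uses the Wiener chaos decomposition of $L^2(C^r(\RR^d),\mu_W)$: on the first chaos the shifted correlation of two generators is $\varkappa_W(\,\cdot\,-y)$, on the $k$‑th chaos it is a finite sum of $k$‑fold products of such terms, so $\varkappa_W(y)\to0$ forces asymptotic decorrelation on each chaos and hence, by density and unitarity of the Koopman operators, mixing of the $\RR^d$‑action; mixing implies ergodicity.

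\emph{Step 3 (averaging over balls).} Given that $\{\tau_y\}_{y\in\RR^d}$ is an ergodic, jointly measurable, measure‑preserving action and that the Euclidean balls $\{B_R\}_{R>0}$ form a regular averaging family (they satisfy the Wiener/Tempelman condition, e.g.\ $|B_{2R}|=2^d|B_R|$, which supplies the maximal inequality via the Vitali covering lemma), Wiener's pointwise ergodic theorem yields, for any $\Phi\in L^1(\mu_W)$,
\[
\Bint_{B_R}\Phi\circ\tau_y\,dy\;\longrightarrow\;\bE\Phi\qquad\text{as }R\to\infty,
\]
$\mu_W$‑almost surely. The $L^1$ convergence follows by a routine approximation: bounded functionals are dense in $L^1(\mu_W)$, for bounded $\Phi$ the a.s.\ convergence upgrades to $L^1$ by dominated convergence, and $\big\|\Bint_{B_R}\Phi\circ\tau_y\,dy\big\|_{L^1}\le\|\Phi\|_{L^1}$ by translation invariance and Jensen, so the averages depend $L^1$‑continuously on $\Phi$ uniformly in $R$.

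\emph{Main obstacle.} Steps~1 and~3 are essentially bookkeeping once the relevant classical theorems are quoted. The one point that requires genuine care is Step~2: one must either cite the Gaussian ergodicity/mixing criterion in a form valid for random \emph{fields} over $\RR^d$ (and verify that $\fS$ agrees, up to $\mu_W$‑completion, with the $\sigma$‑algebra generated by the field, so that the criterion applies on the correct probability space) or else carry out the chaos‑expansion decorrelation estimate explicitly.
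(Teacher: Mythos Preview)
Your proposal is correct and follows essentially the same approach as the paper: both arguments deduce translation invariance from stationarity of the covariance kernel, obtain ergodicity of the $\RR^d$-action from the fact that the spectral measure (normalized Lebesgue on $\cQ$) has no atoms via a classical Gaussian criterion (the paper cites Grenander--Fomin--Maruyama; you use the equivalent decay $\varkappa_W\to0$, which in fact gives mixing), and then apply Wiener's ergodic theorem to conclude. The only cosmetic difference is ordering---the paper first invokes Wiener to get a translation-invariant limit $\Phi^*$ and then uses ergodicity to identify $\Phi^*=\bE\Phi$, whereas you establish ergodicity first and then quote Wiener in its ergodic form---and your $L^1$-convergence argument is spelled out a bit more carefully than the paper's.
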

\begin{proof}
Since the covariance kernel $\ka_W(x,x')$ only depends on~$x-x'$, the
probability measure~$\mu_W$ is obviously translationally invariant. Also, note
	that $(y,w)\mapsto \tau_yw$ defines a continuous map
	\[
	\RR^d\times C^r(\RR^d)\to C^r(\RR^d)\,,
	\]
	so the map $(y,w)\mapsto \Phi(\tau_yw)$ is measurable on the product
	space $\RR^d \times C^r(\RR^d)$. Wiener's ergodic
	theorem~\cite{NS16, Bec81} then ensures that, for $\Phi$ as in the
	statement, there is a random variable $\Phi^*\in
	L^1(C^r(\RR^d),\mu_W)$ such that
	\[
	\Bint_{B_R} \Phi\circ\tau_y\, dy \Lone \Phi^*
	\]
	as $R\to\infty$. Furthermore, $\Phi^*$ is translationally invariant
	(i.e., $\Phi^*\circ\tau_y=\Phi^*$ for all $y\in\RR^d$ almost surely) and $\bE\Phi^* =
	\bE\Phi$.
	
	Also, as the spectral measure of \eqref{eq:kernel} has no atoms, a theorem of Grenander, Fomin and Maruyama (see
	e.g.~\cite[Appendix B]{NS16}) ensures that the action of the translations
	$\{\tau_y: y\in\RR^d\}$ on the probability space
	$(C^r(\RR^d),\fS,\mu_W)$ is ergodic.
	As the measurable
	function~$\Phi^*$ is translationally invariant, one then infers that
	$\Phi^*$ is constant $\mu_W$-almost surely. As $\Phi$ and $\Phi^*$ have
	the same expectation, then $\Phi^*=\bE \Phi$ almost surely.
	The proposition then follows.
\end{proof}

Let $L^2_\cH(\cQ)$ denote the space of Hermitian square-integrable functions supported on~$\cQ$ (that is, those for which $f(x) = \overline{f(-x)}$) and let
\[
\cF_\cQ:= \overline{\{w\in C^r(\RR^d): \widehat w\in L^2_\cH(\cQ)\}}
\]
be the closure of the space of $C^r$-smooth functions whose Fourier transform is in~$L^2_\cH(Q)$. This closure is taken with respect to the Fr\'echet topology on~$C^r$, corresponding to uniform $C^r$-convergence on compact sets.

\begin{remark}\label{R.obvious}
Any function $w$ on $\RR^d$ which is the Fourier transform of a tempered distribution $\widehat w$ supported on $\cQ$ is in $\cF_\cQ$. This follows from the fact that $\widehat w$ can be approximated, in the sense of tempered distributions, by functions in $L^2(\cQ)$, which yields $C^r$-convergence in compact sets when the Fourier transform is taken.

\end{remark}

To conclude this section, we present a variation of a standard lemma (see, e.g., \cite[Appendix A.7]{NS16})  adapted to our setting. To state this result, recall that the {\em support}\/ of the probability measure $\mu_W$ is the closed set consisting of those $v\in C^r(\RR^d)$ such that $\mu_W(U)>0$  for any open neighborhood $U \ni v$.

\begin{lemma}\label{L.support}
The support of the probability measure~$\mu_W$ is~$\cF_\cQ$. In
particular, given any $w\in\cF_\cQ$, any compact $K\subset\RR^d$ and
any $\ep>0$,
\[
\mu_W\big(\{ v\in C^r(\RR^d): \|v-w\|_{C^r(K)}<\ep\}\big)>0\,.
\]

\end{lemma}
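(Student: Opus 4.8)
The plan is to establish the two inclusions $\supp\mu_W\subseteq\cF_\cQ$ and $\cF_\cQ\subseteq\supp\mu_W$ separately, working throughout with the explicit series~\eqref{defW}. Conceptually this is an instance of the general principle that the support of a centered Gaussian measure is the closure of its Cameron--Martin space, which in our case is $\{w\in C^r(\RR^d):\widehat w\in L^2_\cH(\cQ)\}$ because the spectral measure of~$W$ is the normalized uniform measure on~$\cQ$; but since we have the series at hand, a direct argument is both short and cleaner in the Fr\'echet setting.

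The inclusion $\supp\mu_W\subseteq\cF_\cQ$ is the easy one. By Proposition~\ref{P.conv} and the last paragraph of its proof, with probability~$1$ the random function~$W$ lies in $C^r(\RR^d)$ and its Fourier transform is supported in~$\cQ$; by Remark~\ref{R.obvious} this gives $W\in\cF_\cQ$ almost surely, so $\mu_W(\cF_\cQ)=1$. Since $\cF_\cQ$ is closed in $C^r(\RR^d)$ by definition, the support of~$\mu_W$, being the smallest closed set of full measure, is contained in~$\cF_\cQ$.

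For the reverse inclusion it is enough to show that each $w\in\cF_\cQ$ belongs to $\supp\mu_W$, that is, that $\mu_W\big(\{v\in C^r(\RR^d):\|v-w\|_{C^r(K)}<\ep\}\big)>0$ for every compact $K\subset\RR^d$ and every $\ep>0$; as these sets form a neighbourhood basis of~$w$ in the Fr\'echet topology, this simultaneously proves the ``in particular'' part of the statement. By the definition of~$\cF_\cQ$ and the triangle inequality we may reduce to the case $\widehat w\in L^2_\cH(\cQ)$. Expanding $\widehat w$ in the orthonormal basis $\{\phi_n\}$ of~\eqref{E.phin} and taking Fourier transforms yields an expansion $w=\sum_{n\in\ZZ^d}\ga_n\hphi_n$ with $\sum|\ga_n|^2<\infty$, the series converging in $C^r(\RR^d)$ uniformly on compact sets (since $\cQ$ is bounded, the inverse Fourier transform maps $L^2(\cQ)$ continuously into $C^r$ on compacta, by Cauchy--Schwarz). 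Now split $W=S_N+R_N$ and $w=s_N+r_N$ according to whether $|n|_\infty\leq N$ or $|n|_\infty>N$, where $S_N:=\sum_{|n|_\infty\leq N}b_n\hphi_n$ and $s_N:=\sum_{|n|_\infty\leq N}\ga_n\hphi_n$. The tails satisfy $R_N\to0$ in $C^r(K)$ almost surely (hence in probability) by Proposition~\ref{P.conv}, and $r_N\to0$ in $C^r(K)$ deterministically, so we may fix $N$ with $\|r_N\|_{C^r(K)}<\ep/6$ and $\bP(\|R_N\|_{C^r(K)}<\ep/6)\geq\tfrac12$. The crucial point is that $S_N$ and $R_N$ are independent, being measurable functions of disjoint subfamilies of the i.i.d.\ variables $\{b_n\}$; hence
\[
\mu_W\big(\{v:\|v-w\|_{C^r(K)}<\ep\}\big)\geq \bP\big(\|S_N-s_N\|_{C^r(K)}<\ep/6\big)\cdot\bP\big(\|R_N\|_{C^r(K)}<\ep/6\big).
\]
It remains to see that the first factor is positive. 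But $S_N-s_N=\sum_{|n|_\infty\leq N}(b_n-\ga_n)\hphi_n$ is a Gaussian random element of the finite-dimensional space $V_N:=\Span\{\hphi_n:|n|_\infty\leq N\}$ whose mean lies in $V_N$ and whose covariance $\sum_{|n|_\infty\leq N}\hphi_n\otimes\hphi_n$ is non-degenerate on $V_N$, because the functions $\hphi_n$ with $|n|_\infty\leq N$ are linearly independent (they are the Fourier transforms of the orthonormal $\phi_n$). Its law therefore has a strictly positive density on $V_N$, so it assigns positive mass to the nonempty open set $\{f\in V_N:\|f\|_{C^r(K)}<\ep/6\}$, which contains~$0$. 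This finishes the proof.

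The heart of the matter is the inclusion $\cF_\cQ\subseteq\supp\mu_W$, and the mechanism that makes it work is the independence of the low- and high-frequency parts of the series~\eqref{defW}: this is precisely what reduces a small-ball estimate for the infinite-dimensional Gaussian~$W$ to one for a finite-dimensional, non-degenerate Gaussian, where positivity is elementary. One could instead invoke the Cameron--Martin theorem together with the identification of the Cameron--Martin space with (the Fourier image of) $L^2_\cH(\cQ)$, but in a Fr\'echet rather than a Banach space this needs some care, which the direct argument above avoids. A minor point to watch is that neighbourhoods in $C^r(\RR^d)$ are governed by a single seminorm $\|\cdot\|_{C^r(K)}$, which is what legitimizes both the triangle-inequality reduction to $\widehat w\in L^2_\cH(\cQ)$ and the finite-dimensional step.
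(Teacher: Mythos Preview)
Your proof is correct and, for the main inclusion $\cF_\cQ\subseteq\supp\mu_W$, follows essentially the same line as the paper: reduce to $\widehat w\in L^2_\cH(\cQ)$, expand in the basis $\{\hphi_n\}$, split $W$ and $w$ into low- and high-frequency parts, and use the independence of the two pieces together with the non-degeneracy of the finite-dimensional Gaussian $S_N$. Your treatment of the finite-dimensional step (positive density on $V_N$ from linear independence of the $\hphi_n$) is slightly slicker than the paper's explicit product of one-dimensional normal probabilities, but the idea is the same.

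Where you genuinely diverge is in the easy inclusion $\supp\mu_W\subseteq\cF_\cQ$. The paper argues indirectly: given $v$ in the support, it shows that for every $K$ and $\ep$ the set $\{w\in\cF_\cQ:\|w-v\|_{C^r(K)}<\ep\}$ has positive $\mu_W$-measure (using that the truncation $W_<\in\cF_\cQ$ is close to $W$ with high probability), whence $v$ lies in the closure of $\cF_\cQ$. Your route is more direct and shorter: Proposition~\ref{P.conv} already establishes that $\widehat W$ is a.s.\ supported in $\cQ$, so by Remark~\ref{R.obvious} one has $W\in\cF_\cQ$ a.s., and since $\cF_\cQ$ is closed the support must sit inside it. This bypasses the approximation argument entirely and is the cleaner way to handle that direction.
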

	\begin{proof}
		First, we show that for all~$v$ in
		\[
		\cF_\cQ'\coloneqq\{w\in C^r(\RR^d): \widehat w\in L^2_\cH(\cQ)\}\,,
		\]
		every compact set $K\subset \RR^d$, and every $\varepsilon>0$,  the probability
		\[
		\bP\{ \| W-v \|_{C^r(K)} < \varepsilon \}
		\]
		is positive.
		
		Since the Fourier transform $\hv$ is an $L^2$~function supported on~$\cQ$, one can write
		\begin{equation}\label{E.hv}
		\hv (\xi)=\sum_{n\in\ZZ^d} c_n\, \phi_n(\xi)
		\end{equation}
		where the series converges in~$L^2(\RR^d)$ and where the functions $\phi_n$ were defined in~\eqref{E.phin}. Note that
		\[
		\|\hv\|_{L^2}^2= \sum_{n\in\ZZ^d}|c_n|^2\,.
		\]
				For future reference, let us introduce the notation
		\[
		M_N:=\#\{n\in\ZZ^d: |n|\leq N\}
		\]
		and observe that
		\[
		m_0:=\sup_{n\in\ZZ^d}\|\hphi_n\|_{C^r(\RR^d)}
		\]
		is finite by the estimate~\eqref{E.boundhPhi}. In particular, since
		\[
		|\pd^\al v(x)|=(2\La)^{-d}\left|\int_{\cQ} (i\xi)^\al \hv(\xi)\, e^{ix\cdot\xi}\, d\xi\right|\lesssim \|\hv\|_{L^2}
		\]
		for any multiindex~$\al$ and any $x\in\RR^d$, it follows from the fact that that the expansion~\eqref{E.hv} converges in~$L^2$ that, for any $\ep>0$, there exists some $N_\ep$ such that
		\[
		v_{N}(x):= \sum_{|n|>N} c_n\, \hphi_n(x)
		\]
		satisfies
		\[
		\|v_{N}\|_{C^r(K)}<\ep
		\]
		for all $N\geq N_\ep$.
		
		Likewise, as the series
		\[
		W(x)=\sum_{n\in\ZZ^d} b_n\,\hphi_n(x)
		\]
		converges in $C^r(K)$ with probability~1, one deduces that for every~$\ep>0$ there exists some $N'_\ep$ such that the random field
		\[
		W_{N}(x):= \sum_{|n|>N} b_n\,\hphi_n(x)
		\]
		satisfies
		\[
		\bP\{ \|W_{N}\|_{C^r(K)}<\ep\}>0
		\]
		for all $N\geq N_\ep'$.
		
		Let us now pick $N:= \max\{N_{\ep/4},N_{\ep/4}'\}$. The elementary inequality
		\[
		\|W-v\|_{C^r(K)}\leq m_0\sum_{|n|\leq N}|b_n-c_n|+ \|W_N\|_{C^r(K)}+\|v_N\|_{C^r(K)}
		\]
		then permits to conclude that
		\begin{align*}
	\bP\{\| W-v \|_{C^r(K)} < \varepsilon \} &\geq \bP\{\| W_N \|_{C^r(K)} <\ep/4\}\\ &\cdot \bP\{\| v_N \|_{C^r(K)} <\ep/4\}
	\cdot \prod_{|n|\leq N}P(c_n,\ep/(2m_0M_N))\\
&=\bP\{\| W_N \|_{C^r(K)} <\ep/4\}\cdot \prod_{|n|\leq N}P(c_n,\ep/(2m_0M_N))\,,
	\end{align*}
where
\[
	P(c,\rho):=\int_{c-\rho}^{c+\rho}\frac{e^{-t^2/2}}{\sqrt{2\pi}}dt
\]
		is the probability that a standard normal random variable~$Z$ satisfies $|Z-c|<\rho$. Thus
		\[
		\bP\{\| W-v \|_{C^r(K)}<\ep\}>0\,.
		\]
		As the support of a measure is a closed set with respect to the Fr\'echet topology on~$C^r(\RR^d)$, this implies that
		$		 \cF_\cQ=\overline{ \cF_\cQ'}$ must be contained in the support of~$\mu_W$.

		Let us now prove the converse inclusion. To this end, pick some $v\in C^r(\RR^d)$ in the support of the measure, that is, such that
		\[
		\de:= \mu_W(\{w\in C^r(\RR^d): \|w-v\|_{C^r(K)}<\tfrac\ep2\})= \bP\{\|W-v\|_{C^r(K)}<\tfrac\ep2\}>0
		\]
	for any $\ep>0$. Our objective is then to show that
		\[
		\mu_W(\{w\in\cF_\cQ : \|w-v\|_{C^r(K)}<\ep\})
		\]
		is also positive.
		
		As the series for~$W$ converges in~$C^r(K)$ by Proposition~\ref{P.conv}, there is some $N$ such that
		\[
		\bP\{\|W_N\|_{C^r(K)}>\tfrac\ep2\}<\tfrac\de2\,.
		\]
		As
		\[
		W_<(x):=\sum_{|n|\leq N} b_n\hphi_n(x)
		\]
		obviously is a function in~$\cF_\cQ$, the inequality
		\[
		\|W_<-v\|_{C^r(K)}\leq \|W-v\|_{C^r(K)}+ \|W_N\|_{C^r(K)}
		\]
		implies that
		\begin{multline*}
		\mu_W(\{	w\in\cF_\cQ : \|w-v\|_{C^r(K)}<\ep\}) \geq \bP\{ \|W_< - v\|_{C^r(K)}<\ep\}\\
		\geq \bP\{\|W-v\|_{C^r(K)}<\tfrac\ep2\}-\bP\{\|W_N\|_{C^r(K)}>\tfrac\ep2\}>\tfrac\de2\,,
		\end{multline*}
		for any $\ep>0$. Therefore, we conclude that the support of ~$\mu_W$ is contained in $\cF_\cQ$ and the proposition follows.
	\end{proof}

\section{Dynamics of random Hamiltonian systems on~$\RR^d$}
\label{S.dynamicsRd}

In this section we shall consider the dynamics of the Gaussian
random vector field
\[
X_W:=P\,\frac\pd{\pd x}-\nabla W(x)\,\frac\pd{\pd P}
\]
associated with the random potential~$W$ which we defined in~\eqref{defW}. More generally, given any potential $w\in C^r(\RR^d)$, throughout this section we use the notation $X_w$  for the vector field corresponding to the Hamiltonian function
\[
H_w(x,P):=\frac12|P|^2+ w(x)\,,
\]
with $(x,P)\in\RR^{2d}$ being canonically conjugated variables. Through~$X_W$, the probability measure~$\mu_W$
induces a probability measure~$\mu_X$ on the space of $C^{r-1}$~vector fields
on~$\RR^{2d}$.

As mentioned in the Introduction, by a chaotic invariant set we mean a horseshoe, which is a
a connected compact hyperbolic set invariant under the flow of the
Hamiltonian field and which has positive topological entropy. If $\gamma$ is a hyperbolic periodic orbit of the field, we shall denote its stable and unstable invariant manifolds by $W\st(\gamma)$ and $W\un(\gamma)$, respectively.

The following proposition shows the existence of an analytic potential on $\RR^d$ whose corresponding Hamiltonian field is chaotic and has a positive volume set of invariant tori, and such both properties are preserved by small perturbations.

\begin{proposition}\label{L.chaos0}
In the set
\[
\Om:=\{(x,P)\in\RR^{2d}:|x|^2+|P|^2<25\}
\]
and for a nonempty interval of energy values $(h_1,h_2)$,
the Hamiltonian vector field associated with the potential
\[
W_\eta(x):=\frac12 x_1^2+\sum_{j=2}^d(1-\cos x_j)+\frac{\eta}{2}\sum_{j=2}^d(x_1-x_j)^2\,,
\]
with $\eta>0$ a suitably small constant, exhibits:
\begin{enumerate}
   \item Two hyperbolic periodic orbits $\gamma_+$ and $\gamma_-$ with transverse homoclinic intersections. In particular, this implies the existence of horseshoes.

   \item A set of $d$-dimensional ergodic invariant tori of positive volume.
   \end{enumerate}
   Furthermore, there exists some $\de>0$ such that the Hamiltonian field associated with a function $H\in C^r(\RR^{2d})$, $r\geq 2d+1$, also has these properties provided that the $C^r(\Om)$-norm of the difference $\frac12|P|^2+W_\eta(x)- H(x,P)$ is smaller than~$\de$.
\end{proposition}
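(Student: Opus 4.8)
The plan is to regard $H_{W_\eta}=\tfrac12|P|^2+W_\eta$ as a perturbation, of size controlled by~$\eta$, of the completely integrable Hamiltonian obtained at $\eta=0$, which decouples into a harmonic oscillator in the plane $(x_1,P_1)$ and $d-1$ uncoupled pendula in the planes $(x_j,P_j)$, $j=2,\dots,d$. Each pendulum plane carries a hyperbolic equilibrium at $(\pm\pi,0)$ whose branches form a heteroclinic connection $(\pi,0)\leftrightarrow(-\pi,0)$, together with a one-parameter family of librational periodic orbits around $(0,0)$; the oscillator plane is filled with circular periodic orbits of every amplitude. Take $\gamma^0_{\pm}$ to be the product of a fixed oscillator circle with the equilibrium $(\pm\pi,0)$ in the second pendulum plane and with the equilibria $(\pi,0)$ in the remaining pendulum planes, which serve as hyperbolic spectators. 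A standard computation of the Poincar\'e return map --- the return time to a transversal to the oscillator circle equals the oscillator period~$2\pi$, during which the normal dynamics is a product of pendulum saddle blocks --- shows that $\gamma^0_{\pm}$ are hyperbolic periodic orbits on their common energy level, with $W\un(\gamma^0_+)=W\st(\gamma^0_-)$ and $W\un(\gamma^0_-)=W\st(\gamma^0_+)$ along the (non-transverse) heteroclinic orbits of the second pendulum. Choosing the oscillator amplitude and the spectators small enough, this whole skeleton together with a neighborhood of it lies in~$\Om$; and for suitable oscillator amplitude and pendulum libration energies, the energy levels near that of $\gamma^0_\pm$ also contain a positive-volume family of $d$-dimensional invariant tori carrying rationally independent --- hence ergodic --- linear flows.

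For $0<\eta\ll1$, hyperbolic periodic orbits are $C^1$-persistent, so $\gamma^0_\pm$ deform into hyperbolic periodic orbits $\gamma^\eta_\pm$ of $X_{W_\eta}$, and it remains to show that the coincident invariant manifolds split transversally. I would do this with the Poincar\'e--Melnikov method: for the perturbation $H_1:=\tfrac12\sum_{j\geq2}(x_1-x_j)^2$, the Melnikov function along a heteroclinic orbit of the second pendulum is $M(t_0)=\int_{\RR}\{H_0,H_1\}\,dt$, which after integrating the exact part by parts (the boundary term vanishing because $x_2^{\mathrm{sep}}(t)\to\pm\pi$ as $t\to\mp\infty$) reduces to a nonzero multiple of $\int_{\RR}\dot x_2^{\mathrm{sep}}(t)\,x_1(t+t_0)\,dt$, where $x_1(t)=a\cos t$ is the oscillator and $\dot x_2^{\mathrm{sep}}(t)=\pm2\sech t$ the separatrix velocity. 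Since $\int_{\RR}\sech(t)\cos t\,dt=\pi\sech(\pi/2)\neq0$ and $\sech(t)\sin t$ is odd, one gets $M(t_0)=c\cos t_0$ with $c\neq0$, which has simple zeros. Hence for $\eta$ small $W\un(\gamma^\eta_+)$ and $W\st(\gamma^\eta_-)$ intersect transversally, and symmetrically; this transverse heteroclinic cycle (which moreover forces transverse homoclinic orbits to each $\gamma^\eta_\pm$) produces horseshoes, and by continuity everything persists on a fixed nonempty energy interval $(h_1,h_2)$.

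For the invariant tori I observe that the positive-volume family above already exists at $\eta=0$, where it consists of KAM tori of the integrable Hamiltonian; the relevant non-degeneracy is the isoenergetic Kolmogorov condition, which holds because each pendulum is anharmonic --- its libration frequency strictly varies with the libration energy --- even though the oscillator is isochronous. Therefore the isoenergetic KAM theorem yields a positive-measure subset of these tori, with Diophantine (hence ergodic) frequency vectors, surviving for $\eta$ small, at energies which --- after shrinking $(h_1,h_2)$ if needed --- overlap those of the horseshoes; this gives conclusions (i) and (ii). For the robustness statement one fixes such an $\eta$: a transverse intersection of the (un)stable manifolds of hyperbolic periodic orbits is a $C^1$-open condition and the orbits themselves are $C^1$-persistent, so any $H\in C^r(\RR^{2d})$ with $\|\tfrac12|P|^2+W_\eta-H\|_{C^r(\Om)}<\de$ still displays the transverse homoclinic picture and hence horseshoes; and Diophantine KAM tori are persistent under $C^r$-small perturbations once $r$ is sufficiently large in terms of~$d$, for which $r\geq2d+1$ is amply enough. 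Choosing $\de$ below both persistence thresholds finishes the proof.

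The main obstacle is making the Melnikov computation rigorous in the full coupled system rather than in a formal two-degree-of-freedom model: one has to show that the normally hyperbolic invariant manifold carrying $\gamma^0_\pm$ and the second-pendulum dynamics persists for $\eta\neq0$, and that replacing the genuine motion of $x_1$ on it by the unperturbed $a\cos t$ alters the splitting only at order $\eta^2$, so that the sign information in $M(t_0)$ survives and yields transversality. The KAM step is comparatively routine --- the isoenergetic Kolmogorov determinant of the $\eta=0$ system is, up to sign, the product of the quantities $\partial\omega_j/\partial I_j$, which are nonzero for every librational energy --- provided one keeps the tori away from the symmetric equilibria, so that the permutation symmetry of $W_\eta$ in $x_2,\dots,x_d$ causes no trouble. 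Once $\eta$ and the oscillator amplitude have been taken small, verifying that all the relevant invariant sets lie in the fixed ball~$\Om$ over a common energy interval is routine bookkeeping.
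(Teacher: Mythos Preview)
Your approach is the paper's: decompose $H_0$ as a harmonic oscillator plus $d-1$ pendulums, locate hyperbolic periodic orbits sitting over the pendulum saddles, show their separatrices split transversally for small~$\eta$ via a Melnikov computation, invoke the isoenergetic KAM theorem (using the pendulum's anharmonicity) for the tori, and finish with persistence of transverse intersections plus KAM for the robustness clause. The one substantive variation is in the choice of $\gamma_\pm$. The paper places \emph{all} $d-1$ pendulums at $(0,\pm\pi)$, so $W\un(\gamma_+)=W\st(\gamma_-)$ coincide along a full $d$-dimensional sheet and the splitting is governed by a $(d-1)$-parameter Melnikov potential in the style of Delshams--de la Llave--Seara, whose nondegenerate critical points are found via the same identity $\int_\RR\sech(s)\cos s\,ds=\pi\sech(\pi/2)$ you use. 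You instead let only the second pendulum switch sign and keep the remaining ones as hyperbolic spectators; this makes the non-transversality one-dimensional and reduces the computation to a scalar Melnikov integral, at the price of the NHIM-reduction step you rightly flag as the main obstacle (one must argue that the $4$-dimensional manifold $\{x_j=\pi,\,P_j=0:j\geq3\}$ persists and that the induced dynamics on it is still, to leading order, oscillator-plus-pendulum). The paper's symmetric choice avoids that reduction entirely; your choice keeps the Melnikov scalar. Either route works, and the KAM and robustness parts are identical. Two minor corrections: the relevant splitting integral is $\int\{E_2,H_1\}\,dt$ (equivalently $\int\{I,H_1\}\,dt$), not $\int\{H_0,H_1\}\,dt$, though you do arrive at the right integrand; and your spectators sit at $x_j=\pi$, so they cannot be ``taken small''.
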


\begin{proof}
It is clear that the unperturbed Hamiltonian, that is,
\[
H_0(x,P):=\frac12(P_1^2+ x_1^2)+\sum_{j=2}^d(P_j^2+1-\cos x_j)\,,
\]
is formed by a rotor in the variables $(P_1,x_1)$ and $d-1$ pendulums in the variables $(P_j,x_j)$, $j=2,\dots,d$. It is convenient to write the rotor term using action--angle coordinates $(I,\theta)$ defined as $P_1=\sqrt{2I}\cos\theta$, $x_1=\sqrt{2I}\sin\theta$, which yields
\[
H_0(I,\theta,x_2,\cdots,x_d,P_2,\cdots,P_d)=I+\sum_{j=2}^d(P_j^2+1-\cos x_j)\,.
\]
Fixing an energy $h>2(d-1)$, the corresponding Hamiltonian vector field has two periodic orbits $\gamma_+,\gamma_-$ on the level set $\{H_0=h_0\}$ given by the periodic orbit of the rotor
\[
I(t)=I_0:=h-2(d-1)\,,\qquad \theta(t)=\theta+t\,,
\]
and the hyperbolic equilibria of the pendulums
\[
(P_j(t),x_j(t))=(0,\pm \pi)
\]
for all $j=2,\cdots,d$. It is easy to check that both $\gamma_+$ and $\gamma_-$ are hyperbolic. For each pendulum, the two aforementioned equilibria are connected by two separatrices, whose natural parametrizations (as solutions to the corresponding Hamiltonian systems) are well known to be
\begin{equation}\label{eq.separatrix}
(P_{j}^\pm(t),x_{j}^\pm(t))=\pm(2\sech t,2\arctan(\sinh t))\,,
\end{equation}
for all $j=2,\cdots,d$. These separatrices define $d$-dimensional invariant manifolds of the periodic orbits $\gamma_{\pm}$, which can be parametrized as
\[
(I_0,\theta+s_1,P_2^{\pm}(s_2),x_2^{\pm}(s_2),\cdots,P_d^{\pm}(s_d),x_d^{\pm}(s_d))
\]
with $(s_1,s_2,\cdots,s_d)\in \mathbb S^1\times \RR^{d-1}$. We then conclude that the unstable manifold $W\un(\gamma_+)$ coincides with the stable manifold $W\st(\gamma_-)$ and the stable manifold $W\st(\gamma_+)$ coincides with the unstable manifold $W\un(\gamma_-)$. We denote these connections as $\Gamma_{\pm}$, and they form what is known as a heteroclinic cycle (on the energy level $h$).

We claim that the Hamiltonian field $X_{W_\eta}$, on the energy level set
\[
H_\eta:=\frac12|P|^2+W_\eta(x)=h
\]
exhibits two hyperbolic periodic orbits with transverse homoclinic intersections, provided that $\eta$ is small enough. To prove this, by~\cite[Chapter~2.7, Corollary~1]{Palis} it is enough to show that each one of the branches $\Gamma_{\pm}$ of the aforementioned heteroclinic cycle between $\gamma_+$ and $\gamma_-$ intersects transversely when $\eta\neq0$ is small. We notice that by standard hyperbolic theory, the periodic orbits $\gamma_+$ and $\gamma_-$, and their stable and unstable manifolds, persist (up to a small deformation) when $\eta$ is small. Generically, the intersection sets of the perturbed stable and unstable manifolds are $1$-dimensional, and can be described using Melnikov's theory.

Following~\cite{DLS}, the Melnikov potential associated to the Hamiltonian $H_\eta$ and the connection $\Gamma_{\pm}$ is given by
\begin{align*}
 M_\pm(\theta,\tau_1,\cdots,\tau_{d-1})=-\frac12\sum_{j=1}^{d-1}\int_{-\infty}^\infty\Big(\sqrt{2I_0}\sin(\sigma+\theta)\mp2\arctan\sinh(\sigma+\tau_j)\Big)^2d\sigma
\\+\frac12\sum_{j=1}^{d-1}\int_0^\infty\Big(\sqrt{2I_0}\sin(\sigma+\theta)\mp\pi\Big)^2d\sigma+
\frac12\sum_{j=1}^{d-1}\int_{-\infty}^0\Big(\sqrt{2I_0}\sin(\sigma+\theta)\pm\pi\Big)^2d\sigma\,,
\end{align*}
where we have used the expression of the separatrices of the pendulums, cf. Equation~\eqref{eq.separatrix}. The convergence of these improper integrals is standard, and guaranteed by the hyperbolicity of the periodic orbits of the unperturbed system. According to Melnikov's theory, the existence of transverse intersections between the perturbation of the connections $\Gamma_{\pm}$ follows from the existence of nondegenerate critical points of the function $M_{\pm}$ in the variables $(\tau_1,\cdots,\tau_{d-1})$, for each $\theta\in[0,2\pi)$. Let us then compute the partial derivative $\partial_{\tau_j}M_{\pm}$:
\begin{align*}
 \frac{\partial M_\pm}{\partial\tau_j}=\pm2\int_{-\infty}^\infty\Big(\sqrt{2I_0}\sin(\sigma+\theta)\mp2\arctan\sinh(\sigma+\tau_j)\Big)\sech(\sigma+\tau_j)d\sigma\,.
\end{align*}
As usual, to compute this improper integral, we make a change of integration variable $s_j=\sigma+\tau_j$, which yields
\begin{align*}
\frac{\partial M_\pm}{\partial\tau_j}&=\pm2\int_{-\infty}^\infty\Big(\sqrt{2I_0}\sin(s_j+\theta-\tau_j)\mp2\arctan\sinh s_j\Big)\sech s_j\,ds_j\\
&=\pm2\pi\sqrt{2I_0}\sech\frac{\pi}{2}\,\sin(\theta-\tau_j)\,.
\end{align*}
In the second equality we have used that
\[
\int_{-\infty}^\infty \arctan\sinh s_j \sech s_j\,ds_j=\int_{-\infty}^\infty \sin s_j \sech s_j\,ds_j=0
\]
because the integrands are odd functions.

We infer that the critical points of $M_\pm$ in the variables $(\tau_1,\cdots,\tau_{d-1})$, for fixed~$\theta$, are
\[
p_k:=(\theta+\pi k_1,\dots, \theta+\pi k_{d-1})
\]
with $k\in\ZZ^{d-1}$, and they are all nondegenerate because the Hessian matrix
\[
\frac{\partial^2 M_\pm}{\partial\tau_j\partial \tau_l}\bigg|_{p_k}=\mp2\pi\sqrt{2I_0}\sech\frac{\pi}{2}\, (-1)^{k_j} \, \de_{jl}
\]
is obviously invertible. Therefore, the two branches of the heteroclinic cycle of $H_0$ intersect transversely when the perturbation parameter $\eta$ is small enough, as we wanted to show.

In summary, we have proved that, for each $K>2(d-1)$ and all values of $h\in (2(d-1),K)$, if $\eta$ is small enough, the perturbed Hamiltonian system $H_\eta$ exhibits on each energy level two hyperbolic periodic orbits with transverse homoclinic intersections. It is obvious that if $K$ is close to $2(d-1)$, these periodic orbits and homoclinic intersections are contained on the set $\Om$ defined in the statement of the lemma. A straightforward application of the Smale--Birkhoff theorem~\cite[Theorem~5.3.5]{GH} then shows the existence of a horseshoe near the destroyed heteroclinic cycle, and hence in the set~$\Om$.

To study the invariant tori of the perturbed system, we write the unperturbed Hamiltonian $H_0$ using action--angle coordinates $(I_j,\vp_j)$ for the pendulums. For this we need to consider a periodic annular domain in all the variables $(P_j,x_j)$ which is contained in the subset $\Om$ of the phase space. In these coordinates, the Hamiltonian takes the form
\[
H_0(I,I_1,\cdots,I_{d-1})=I+\sum_{j=1}^{d-1}F(I_j)
\]
for some analytic function $F$ whose explicit expression is not relevant for our purposes. We only need to know that $F$ is not an affine function (because the period of the periodic orbits of the pendulum is not constant, contrary to the case of the rotor). Now we can apply the isoenergetic KAM theorem~\cite[Section~6.3.2]{Arnold} noticing that the $(d+1)\times (d+1)$ matrix
\[
\left(
  \begin{array}{cc}
    \nabla^2H_0 & \nabla H_0 \\
    \nabla H_0 & 0 \\
  \end{array}
\right)\,,
\]
where the derivatives are taken with respect to the $d$ action variables, is nondegenerate because its determinant is given by $(-1)^d \prod_{j=1}^{d-1}F''(I_j)$, which is nonzero except for a discrete set of values of the actions $I_j$ (this follows from the fact that $F$ is analytic and not affine). This is precisely the twist type condition needed to apply the isoenergetic KAM theorem on the energy level set $H_\eta=h$ for each $h\in (2(d-1),K)$, where $K$ is the constant that has been taken before. We then infer the existence of a positive volume set of $d$-dimensional ergodic invariant tori for the Hamiltonian $H_\eta$ provided that $\eta$ is small enough,

To complete the proof of the lemma, we observe that the existence of transverse homoclinic intersections for hyperbolic periodic orbits is robust for $C^2$-small perturbations of the Hamiltonian, while the isoenergetic KAM theorem~\cite[Section~6.3.2]{Arnold} holds for $C^r$-small perturbations, with $r\geq 2d+1$. The lemma then follows.
\end{proof}

The potential $W_\eta$ constructed in Proposition~\ref{L.chaos0} is not in the space $\cF_\cQ$ because its Fourier transform $\widehat W_\eta$ (which is well defined as a tempered distribution because $W_\eta$ is polynomially bounded) is not supported on the box $\cQ$. In the following lemma we prove that $W_\eta$ can be approximated on compact sets by a function $W\in\cF_\cQ$. This is key to exploit the properties of the probability measure we introduced in Section~\ref{S.Rd}, whose support is precisely the space~$\cF_\cQ$. In the statement, $\Om$ is the bounded set introduced in Proposition~\ref{L.chaos0} and $r\geq 2d+1$.

\begin{lemma}\label{L.chaos}
There is a constant $\La_0$, such that for each $\La>\La_0$ there exists a function $W_1\in\cF_\cQ$, a constant~$c_1>0$ and some $\de>0$ such that, if $w\in C^r(\RR^d)$ is a potential with $\|w-W_1\|_{C^r(K)}<\de$, then the corresponding Hamiltonian vector field $X_w$ on~$\RR^{2d}$ has a horseshoe and a set of ergodic $d$-dimensional invariant tori of volume greater than~$c_1$ which are contained in~$\Om$, on a nonempty interval of energy levels. Here $K\subset\RR^d$ is the closure of the $x$-projection of the set $\Om\subset\RR^{2d}$.
\end{lemma}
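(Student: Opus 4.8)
The plan is to obtain the lemma from Proposition~\ref{L.chaos0} by approximating, in $C^r(K)$, the model potential $W_\eta$ by a band-limited potential $W_1\in\cF_\cQ$ and then invoking the robustness part of that proposition. First I would fix $\eta>0$ so small that Proposition~\ref{L.chaos0} applies; this yields a threshold $\de_0>0$, a nonempty energy interval $(h_1,h_2)$ and --- by the quantitative isoenergetic KAM theorem, which controls the measure of the complement of the KAM tori by a power of the perturbation size --- a constant $c_1>0$ with the property that every $H\in C^r(\RR^{2d})$ satisfying $\bigl\|\tfrac12|P|^2+W_\eta-H\bigr\|_{C^r(\Om)}<\de_0$ has, inside $\Om$ and on energy levels in $(h_1,h_2)$, a horseshoe and a set of ergodic $d$-dimensional invariant tori of total volume at least $c_1$. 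Since for a potential $w$ the difference $H_w-\bigl(\tfrac12|P|^2+W_\eta\bigr)=w-W_\eta$ depends only on $x$, and $K$ is exactly the $x$-projection of $\Om$, we have $\bigl\|H_w-\tfrac12|P|^2-W_\eta\bigr\|_{C^r(\Om)}=\|w-W_\eta\|_{C^r(K)}$; so it is enough to produce $W_1\in\cF_\cQ$ with $\|W_1-W_\eta\|_{C^r(K)}<\de_0/2$ and then set $\de:=\de_0/2$.

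To build $W_1$ I would use that $W_\eta$ is polynomially bounded, so $\widehat{W_\eta}$ is a well-defined tempered distribution supported on the finite set $S:=\{0\}\cup\{\pm e_j:2\le j\le d\}$, the origin coming from the quadratic part of $W_\eta$ and the points $\pm e_j$ from the $\cos x_j$ terms. Taking $\La_0:=1$ guarantees $S\subset\Int\cQ$ whenever $\La>\La_0$. Although $\widehat{W_\eta}$ is not itself an $L^2$ function (it is a combination of Dirac masses and their derivatives), it can be approximated in the sense of tempered distributions by functions in $L^2_\cH(\cQ)$: smear each Dirac mass and derivative occurring in $\widehat{W_\eta}$ with a smooth approximate identity supported in a small enough ball about the relevant point of $S$ to remain inside $\cQ$, and symmetrize to preserve the Hermitian condition. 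Because all these approximants have Fourier transforms supported in the fixed cube $\cQ$, passing to inverse Fourier transforms turns $\cS'$-convergence into uniform $C^r$-convergence on compact sets --- this is exactly the mechanism behind Remark~\ref{R.obvious} --- so they converge to $W_\eta$ in $C^r(K)$, and we may take $W_1$ to be one of them with $\|W_1-W_\eta\|_{C^r(K)}<\de_0/2$.

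It remains only to assemble the conclusion: if $w\in C^r(\RR^d)$ satisfies $\|w-W_1\|_{C^r(K)}<\de:=\de_0/2$, then $\|w-W_\eta\|_{C^r(K)}<\de_0$, hence $\bigl\|H_w-\tfrac12|P|^2-W_\eta\bigr\|_{C^r(\Om)}<\de_0$, and Proposition~\ref{L.chaos0} yields a horseshoe and a set of ergodic $d$-dimensional invariant tori of volume $>c_1$, all contained in $\Om$ and lying on energy levels in $(h_1,h_2)$. The only delicate point is the spectral approximation of $W_\eta$, and the subtlety there is mild: because $\widehat{W_\eta}$ involves derivatives of the Dirac delta (from the quadratic term), one cannot simply convolve $W_\eta$ with an approximate identity --- that leaves a band-limited distribution unchanged --- but must smear its spectrum directly as above, keeping it inside the fixed cube; choosing $\La_0$ large enough for this and bookkeeping the constants so that the threshold $\de_0$ of Proposition~\ref{L.chaos0} is respected is then routine.
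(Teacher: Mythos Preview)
Your proposal is correct and follows essentially the same route as the paper: approximate $W_\eta$ in $C^r(K)$ by a band-limited potential $W_1\in\cF_\cQ$, then invoke the robustness clause of Proposition~\ref{L.chaos0} together with the triangle inequality. The paper builds $W_1$ by multiplying $\widehat{W_\eta}$ by a smooth cutoff supported in $\cQ$ and sending $\La\to\infty$, whereas you mollify the finitely supported spectrum directly; your sharper observation that $\supp\widehat{W_\eta}\subset\{0\}\cup\{\pm e_j:2\le j\le d\}$ already lies in $\cQ$ once $\La>1$ in fact shows (via Remark~\ref{R.obvious}) that one may simply take $W_1=W_\eta$, making both constructions somewhat redundant.
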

\begin{proof}
Let $\widehat W_\eta$ be the Fourier transform of $W_\eta$, which is a tempered distribution on~$\RR^d$, and consider a smooth cutoff function $\chi:\RR^d\to[0,1]$ which is equal to~1 on the cube $[-\frac\La2,\frac\La2]^d$ and vanishes identically outside $\cQ=[-\La,\La]^d$.
Then
\[
\widehat W_1(\xi):=\widehat W_\eta(\xi)\,\chi(\xi)
\]
is a distribution on $\RR^d$ whose support is contained in $\cQ$. Accordingly, if we define the potential $W_1$ on $\RR^d$ as the Fourier transform of $\widehat W_1$, we infer that it is analytic (by Paley--Wiener) and that it belongs to the space $\cF_\cQ$ (see Remark~\ref{R.obvious}).

Since $\widehat W_1\to \widehat W_\eta$ in the sense of tempered distributions as $\La\to\infty$, standard mapping properties of the Fourier transform imply that $W_1\to W_\eta$ in $C^r(K)$ as $\La\to\infty$, for any fixed compact set $K\subset\RR^d$. Therefore, for any fixed~$K$ there exists some~$\La_0$ so that
\begin{equation}\label{eq:apro}
\|W_1-W_\eta\|_{C^r(K)}<\delta
\end{equation}
provided that $\La>\La_0$.

Taking $r\geq 2d+1$ and $\delta$ small enough, it follows from Proposition~\ref{L.chaos0} and the estimate~\eqref{eq:apro} that the Hamiltonian vector field $X_{W_1}$ has a horseshoe and a positive volume set of ergodic invariant tori in $\Om$, on a nonempty interval of energy levels. The same property holds for any potential $w$ with $\|w-W_1\|_{C^r(K)}<\de$ because of the obvious estimate
\[
\|w-W_\eta\|_{C^r(K)}\leq \|W_1-W_\eta\|_{C^r(K)}+\|w-W_1\|_{C^r(K)}\leq 2\delta\,,
\]
provided that $\delta$ is small enough. This completes the proof of the lemma.
\end{proof}

We are now ready to state and prove the main result of this section, which ensures that the number of horseshoes of the Gaussian random Hamiltonian field~$X_W$ contained in a large ball grows at least volumetrically with probability~1. For technical reasons that will be clear in the next section, it is useful to restrict our attention to tori satisfying certain uniform nondegeneracy conditions. Therefore, let us introduce the following notation:

\begin{definition}\label{D.NR}
	For each $w\in C^r(\RR^d)$ and each $R>0$, we shall denote by~$\cH_R(w)$ the number of horseshoes of the Hamiltonian vector field~$X_w$ that are contained in the set
\begin{equation}\label{cBR}
	\cB_R:= (B_R\times B_R)\cap \{h_1<H_w<h_2\}\subset \RR^{2d}\,,
\end{equation}
where $H_w$ is the Hamiltonian function associated with the potential~$w$ and $(h_1,h_2)$ is a fixed energy interval. Likewise,  we will denote by $\cT_R(w)$ the greatest number of pairwise disjoint compact invariant sets in~$\cB_R$ that contain a volume greater than $V_0>0$ of $d$-dimensional $C^r$-tori invariant under the flow of~$X_w$ and such that:
\begin{enumerate}
\item On the invariant torus, the Hamiltonian vector field $X_w$ is $C^r$-conjugate to a linear flow whose frequency vector~$\om$ satisfies a Diophantine condition
\[
|\om\cdot k|\geq \ga |k|^{-(d-1)-\tau}
\]
for all $k\in\ZZ^d\backslash\{0\}$, with the constants $\ga>\ga_1$ and $\tau\in(\tau_1,\tau_2)$, for some fixed $\ga_1,\tau_1,\tau_2>0$.

\item The invariant torus satisfies the isoenergetic twist condition~\cite{Haro} (see Hypothesis $H_4'$ in Theorem~3.6, with $n=d$), and the twist determinant is larger than a fixed positive constant~$c_1$.
\item The $C^r$ norm of the linearizing embedding $K:\mathbb T^d\to\RR^{2d}$ of item $(i)$ is bounded by a fixed constant $c_2$.
\end{enumerate}
Furthermore, we shall use the notation
\[
\Phi_R(w):= \min \big\{ \cH_R(w),\cT_R(w)\big\}\,.
\]
\end{definition}

\begin{remark}\label{R.chaos}
By Proposition~\ref{L.chaos0}, we can choose the parameters $h_1,h_2,\ga_1,\tau_1,\tau_2,c_1,c_2$ and $V_0$ in this definition so that any potential with $\|w-W_\eta\|_{C^r(B_5)}<\de$ satisfies $\Phi_5(w)\geq1$, for $r\geq 2d+1$. In fact, we observe that the Hamiltonian field $X_w$ exhibits a horseshoe and a positive $(d-1)$-volume set of invariant tori in the level set $\{H_w=h\}$ for each $h\in (h_1,h_2)$.
\end{remark}

%

The main result of this section can then be stated as follows. The constant $\La_0$ in the statement was introduced in Lemma~\ref{L.chaos}.

\begin{theorem}\label{T:chaos}
For any $\La>\La_0$, with probability~$1$,
\[
\liminf_{R\to\infty}\frac{\Phi_R(W)}{R^d}>\nu_0\,,
\]
where~$\nu_0$ is a positive constant independent of~$R$.
\end{theorem}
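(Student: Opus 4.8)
The plan is to deduce Theorem~\ref{T:chaos} from the ergodic theorem for $\mu_W$ (Lemma~\ref{L.ergodic}), the support characterization (Lemma~\ref{L.support}), and the robustness statement in Lemma~\ref{L.chaos}, by an argument in the spirit of the ``almost sure asymptotics for zeros/critical points'' literature. Fix $\La>\La_0$ and let $W_1\in\cF_\cQ$, $\de>0$, $c_1>0$ and the compact set $K$ be as in Lemma~\ref{L.chaos}; enlarging $K$ if necessary, assume $K\supset B_5$, so that $K$ contains the $x$-projection of a translate of $\Om$. We want a translationally invariant ``counting density'' on which to run Wiener's ergodic theorem. The natural choice is the indicator random variable
\[
\Psi(w):= 1_{\{\|w-W_1\|_{C^r(K)}<\de\}}\,,
\]
which lies in $L^1(C^r(\RR^d),\mu_W)$ and has, by Lemma~\ref{L.support}, strictly positive expectation $p_0:=\bE\Psi=\mu_W\big(\{\|w-W_1\|_{C^r(K)}<\de\}\big)>0$.

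First I would fix a lattice of disjoint translated balls tiling space efficiently: for $z\in(2\La')\ZZ^d$ (with $\La'$ chosen so that the translated copies of $\Om$, resp.\ of $K$, are pairwise disjoint, say $\La'\geq\diam K$) consider $\Psi\circ\tau_z(W)$. The key observation is a \emph{localization/superadditivity} fact: if $\Psi(\tau_z W)=1$, then by Lemma~\ref{L.chaos} the field $X_W$ has, inside the translated set $z+\Om\subset \cB_R$ (provided $z+\Om\subset B_R\times B_R$), at least one horseshoe on the energy interval $(h_1,h_2)$ and a pairwise-disjoint family of invariant tori of total volume $>c_1$ satisfying the uniform nondegeneracy conditions (i)--(iii) of Definition~\ref{D.NR}; here one must have chosen the parameters $h_1,h_2,\ga_1,\tau_1,\tau_2,c_1,c_2,V_0$ once and for all as in Remark~\ref{R.chaos} so that the tori produced by Lemma~\ref{L.chaos0}, hence by Lemma~\ref{L.chaos}, meet all of them. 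Because the sets $z+\Om$ for distinct lattice points $z$ are disjoint, the horseshoes (resp.\ the disjoint torus families) coming from different $z$'s are genuinely distinct, so
\[
\Phi_R(W)\;=\;\min\{\cH_R(W),\cT_R(W)\}\;\geq\;\#\{z\in(2\La')\ZZ^d : z+\Om\subset B_R\times B_R,\ \Psi(\tau_z W)=1\}\,.
\]
(Strictly, $\cT_R$ counts disjoint invariant \emph{sets} containing torus-volume $>V_0$; since each $z+\Om$ contributes a set of torus-volume $>c_1\ge V_0$ and these sets are disjoint, the same lower bound holds.)

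Next I would pass to the ergodic average. The number of lattice points $z$ with $z+\Om\subset B_R\times B_R$ and $\Psi(\tau_z W)=1$ is, up to the boundary layer and the fact that $\Om$ lives in $\RR^{2d}$ but $\Psi$ only sees the $x$-variables, comparable to a Riemann sum for $\Bint_{B_R}\Psi\circ\tau_y\,dy$: more precisely, $|B_R|^{-1}\sum_{z}\Psi(\tau_z W)\,|C_z|$, where $C_z$ is the fundamental cell at $z$, differs from $\Bint_{B_R}\Psi\circ\tau_y\,dy$ by a quantity that is $O(R^{-1})$ deterministically (boundary cells) plus an error controlled by continuity of $y\mapsto\Psi(\tau_yW)$ — here one should smooth $\Psi$ from below by a continuous function $\widetilde\Psi\le\Psi$ with $\bE\widetilde\Psi\ge p_0/2$, replacing $W_1$-ball of radius $\de$ by one of radius $\de/2$ so the bump still has positive measure, to avoid measurability/continuity issues with the indicator. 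Applying Lemma~\ref{L.ergodic} to $\widetilde\Psi$ gives $\Bint_{B_R}\widetilde\Psi\circ\tau_y\,dy\to\bE\widetilde\Psi\ge p_0/2$ almost surely, and since $B_R^{2d}\supset(B_{R-C}\times B_{R-C})$ covers a fixed fraction of the $d$-dimensional ``slab'' we are averaging over, one concludes
\[
\liminf_{R\to\infty}\frac{\Phi_R(W)}{R^d}\;\ge\;c\,\bE\widetilde\Psi\;\ge\;\frac{c\,p_0}{2}\;=:\;\nu_0\;>\;0
\]
almost surely, where $c>0$ is a purely geometric constant (volume of a ball of radius $5$ in $\RR^{2d}$ vs.\ the fundamental cell volume, times the fraction of directions). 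This is the desired statement.

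The main obstacle — and the step deserving the most care — is the \emph{passage from the translational ergodic average in the $x$-variable to a genuine count of horseshoes in the $2d$-dimensional phase region $\cB_R$}. Two points need attention: (a) $\Psi$ depends only on the base variable $x\in\RR^d$ while horseshoes live in $\RR^{2d}$, so one must check that the horseshoe produced by Lemma~\ref{L.chaos} from a local match of the potential around $z$ indeed sits inside $(B_R\times B_R)\cap\{h_1<H_W<h_2\}$ once $|z|\lesssim R$ — this is where the precise choice of the energy window $(h_1,h_2)$, independent of the translate, matters, together with the bound $|\nabla W(x)|=O(|x|^{\rho-1})$ from Proposition~\ref{P.conv} to control the momentum range on those energy levels near $x=z$; (b) one must verify the count is honest, i.e.\ that distinct lattice points give distinct horseshoes and torus families, which follows from disjointness of the sets $z+\Om$ but should be stated. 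A secondary technical point is that the indicator $\Psi$ is not continuous on $C^r(\RR^d)$, handled by the lower continuous bump $\widetilde\Psi$ as above; with these in place the argument is a routine combination of Lemmas~\ref{L.ergodic}, \ref{L.support} and~\ref{L.chaos}.
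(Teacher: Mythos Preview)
Your overall strategy---combine Lemma~\ref{L.support} (positive probability of the local event $\{\|W-W_1\|_{C^r(K)}<\de\}$) with the ergodic theorem (Lemma~\ref{L.ergodic}) and the robustness in Lemma~\ref{L.chaos}---is exactly the paper's. The difference, and the place where your argument has a genuine gap, is the passage from the \emph{continuous} ergodic average in Lemma~\ref{L.ergodic} to your \emph{discrete} lattice count $\sum_{z\in(2\La')\ZZ^d}\Psi(\tau_zW)$.

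Your proposed bridge is a Riemann-sum comparison, using continuity of $y\mapsto\widetilde\Psi(\tau_yW)$. But the error in each cell $C_z$ is controlled by $\|\tau_yW-\tau_zW\|_{C^r(K)}\lesssim\|W\|_{C^{r+1}(K+z+C_0)}$, and by Proposition~\ref{P.conv} this grows like $|z|^\rho$ with $\rho>d$. Even after capping each cell's error at~$1$ (since $0\le\widetilde\Psi\le1$), the total error over $B_R$ is of order $|B_R|$, which swamps the main term $\bE\widetilde\Psi\cdot|B_R|$ because $\bE\widetilde\Psi<1$. So the smoothing $\widetilde\Psi\le\Psi$ does not rescue the comparison. (A correct fix would be to invoke a discrete $\ZZ^d$ ergodic theorem directly: since the spectral measure of~$W$ is absolutely continuous, the $\RR^d$-action is mixing and hence every lattice subaction is ergodic; but you do not do this, and Lemma~\ref{L.ergodic} as stated is only for continuous averages.)

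The paper avoids the discretization entirely. It projects the counted objects to the $x$-factor, lets $\widetilde\cT_R$ be the number of connected components of the projection, and applies a sandwich inequality for closed sets (\cite[Lemma~6.1]{Beltrami}):
\[
\frac{\widetilde\cT_R(W)}{|B_R|}\;\geq\;\frac{1}{|B_R|}\int_{B_{R-R_0}}\frac{\widetilde\cT_{R_0}(\tau_yW)}{|B_{R_0}|}\,dy\;\geq\;\frac{1}{|B_R|}\int_{B_{R-R_0}}\frac{\cT_{R_0}^m(\tau_yW)}{|B_{R_0}|}\,dy\,,
\]
where $\cT_{R_0}^m:=\min\{\widetilde\cT_{R_0},m\}$ is truncated to lie in $L^1$. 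Lemma~\ref{L.ergodic} then applies directly to the right-hand side, giving $\liminf_R\cT_R(W)/|B_R|\ge\bE\cT_{R_0}^m/|B_{R_0}|\ge\mu_W(\cA')/|B_{R_0}|>0$. The same works for~$\cH_R$. This sandwich lemma is doing precisely the work your Riemann-sum step was meant to do: it converts the global count into a continuous integral of local counts without any regularity hypothesis on~$W$.

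A minor point: your concern~(a) about controlling the momentum via $|\nabla W|=O(|x|^{\rho-1})$ is unnecessary. The horseshoe and tori produced by Lemma~\ref{L.chaos} already lie in $\Om=\{|x|^2+|P|^2<25\}$, so after translating by $z$ in the $x$-variable they sit in $B_{|z|+5}\times B_5\subset B_R\times B_R$ as soon as $|z|\le R-5$; no growth estimate on~$W$ is needed there.
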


\begin{proof}
Let us define
\[
\cA:=\big\{ w\in C^r(\RR^d) : \cT_5(\tau_y w)\geq1\; \text{ for some } y\in\RR^d\big\}\,,
\]
where we recall that $\tau_y w(\cdot)=w(\cdot+y)$.
As $\cA$~is invariant under the group of translations $\{\tau_y: y\in\RR^d\}$ and the measure~$\mu_W$ is ergodic by Lemma~\ref{L.ergodic}, $\mu_W(\cA)$ is either~0 or~1. By Lemma~\ref{L.chaos} and Remark~\ref{R.chaos}, taking $r\geq 2d+1$, for all $\La>\La_0$ there is a function $W_1\in\cF_\cQ$ and some~$\de>0$ such that
\[
\cA':=\{w\in C^r(\RR^d): \|w-W_1\|_{C^r(B_5)}<\de\}\subset \cA\,.
\]
Lemma~\ref{L.support} then guarantees that
$\mu_W(\cA')>0$, so one then concludes that $\mu_W(\cA)\geq \mu_W(\cA')$ must be~1.

Next, let us denote by $\widetilde \cT_R(w)$ the number of connected components of the canonical projection into the first factor $B_R\subset\RR^d$ of the sets counted by $\cT_R(w)$. It is obvious that $\cT_R(w)\geq \widetilde\cT_R(w)$. Since closedness is preserved by projection, the lower bound for sets whose components are closed proved in~\cite[Lemma 6.1]{Beltrami} ensures that, for any $0<R_0<R$,
		\begin{align*}
			\frac{\cT_R(W)}{|B_R|}\geq \frac{\widetilde\cT_R(W)}{|B_R|}\geq \frac1{|B_R|}
			\int_{B_{R-R_0}}\frac{\widetilde\cT_{R_0}(\tau_yW)}{|B_{R_0}|}\, dy \geq \frac1{|B_R|}
			\int_{B_{R-R_0}}\frac{\cT_{R_0}^m(\tau_yW)}{|B_{R_0}|}\, dy\,,
		\end{align*}
		where for any large~$m>1$ we have defined the truncation
		\[
		\cT_R^m(w):= \min\{\widetilde\cT_R(w),m\}\,.
		\]

		As the truncated random variable~$\cT_R^m$ is in
		$L^1(C^r(\RR^d),\mu_W)$ for any~$m$, one can consider the limit
		$R\to\infty$ as in the proof of~\cite[Theorem 6.2]{Beltrami} and invoke
		 Lemma~\ref{L.ergodic} to conclude that
		\begin{align*}
			\liminf_{R\to\infty}\frac{\cT_R(W)}{|B_R|}\geq
			\liminf_{R\to\infty} \frac{|B_{R-R_0}|}{|B_{R}|}\Bint_{B_{R-R_0}}\frac{\cT_{R_0}^m(\tau_yW)}{|B_{R_0}|}\,
			dy = \frac{\bE \cT_{R_0}^m}{|B_{R_0}|}=: c_{R_0,m}
		\end{align*}
		$\mu_W$-almost surely, for any~$R_0$ and $m$. By definition, for $m\geq1$ one has
		\[
		\bE \cT_{R_0}^m\geq \mu_W(\cA')>0
		\]
		if $R_0\geq 5$, so we conclude that		
		 $c_{R_0,m}>0$ for any large enough~$R_0$. As the same argument applies to $\cH_R(W)$, the theorem then follows.
	\end{proof}

\section{Some convergence results}
\label{S.convergence}

In this section we show that there is a connection between the random trigonometric potentials on $\TT^d$ defined in the Introduction, and the random potential $W$ on $\RR^d$ introduced in Section~\ref{S.Rd}. Roughly speaking, the former converges to the latter when the trigonometric order $L$ tends to $\infty$.

We first introduce some notation. Given a point $q\in\TT^d$ and a constant $\La>0$, let us define the rescaled random Gaussian
function
\[
V^{L,q}(x):= V^L(q+\La x/L)\,,
\]
with $x\in\RR^d$. Comparing with~\cite{NS16,Sodin}, we have introduced an additional
constant~$\La>1$ in the scaling to obtain some additional leeway that
will be key in the proof of Theorem~\ref{T1}.

A key ingredient in the proof is the relation between the covariance
kernels of the various Gaussian random functions that we have
introduced so far. Following Nazarov and Sodin~\cite{NS16}, we will be most interested in
the covariance kernel of the rescaled
trigonometric potential~$V^{L,q}$ at a
point~$q\in\TT^d$:
\begin{align}\label{eq:def kerL}
  \ka^{L,q}(x,x')&:= \bE[V^{L,q}(x)\, V^{L,q}(x')]\,.
\end{align}

\begin{proposition}\label{P.covL}
  For any $q\in\TT^d$, the rescaled covariance kernel
  $\ka^{L,q}(x,x')$ has the following properties:
  \begin{enumerate}
  \item It is invariant under translations and independent
    of~$q$. That is, there exists a function $\varkappa^L$ such
    that
    \[
\ka^{L,q}(x,x')=\varkappa^L(x-x')\,.
    \]

    \item Given any compact set
      $K\subset\RR^d$ and any positive integer~$r$,
  \[
\varkappa^{L}(x)\to \varkappa_W(x)
  \]
  in $C^r(K)$ as $L\to\infty$.
  \end{enumerate}
\end{proposition}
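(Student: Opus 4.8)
The plan is to establish (i) by a direct computation of the covariance kernel from the series~\eqref{VL}, and then to deduce (ii) by recognizing $\varkappa^L$ as a Riemann sum for the integral representation~\eqref{E.KWint} of $\varkappa_W$ and upgrading the resulting pointwise convergence to $C^r(K)$ by a compactness argument. For (i), I would start from~\eqref{VL}, which gives $V^{L,q}(x)=d_L^{-1/2}\sum_{0<|n|_\infty\le L}a_n\,e^{in\cdot q}\,e^{i\La n\cdot x/L}$. The stated hypotheses on the coefficients — the real and imaginary parts of $a_n$ are independent centered Gaussians of variance $1/2$, with the single relation $a_n=\overline{a_{-n}}$ — imply $\bE(a_n\overline{a_m})=\de_{n,m}$ for all $n,m\in\ZZ^d\backslash\{0\}$; the only case requiring a short check is $m=-n\ne n$, where $\bE(a_na_n)=\bE[(\Real a_n)^2]-\bE[(\Imag a_n)^2]=0$. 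Since $V^{L,q}$ is real valued one has $\ka^{L,q}(x,x')=\bE[V^{L,q}(x)\,\overline{V^{L,q}(x')}]$, and substituting the series and applying this covariance identity collapses the double sum — the phases $e^{in\cdot q}$ cancelling against $e^{-in\cdot q}$ — to $\ka^{L,q}(x,x')=d_L^{-1}\sum_{0<|n|_\infty\le L}e^{i\La n\cdot(x-x')/L}$, which depends only on $x-x'$ and not on~$q$. This is exactly~(i), with $\varkappa^L(x):=d_L^{-1}\sum_{0<|n|_\infty\le L}e^{i\La n\cdot x/L}$.

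For (ii), the key observation is that $\varkappa^L(x)$ is, up to removing the zero mode and up to the harmless discrepancy between $d_L$ and $(2L+1)^d$, the Riemann sum of $\xi\mapsto e^{ix\cdot\xi}$ over the uniform grid $\{\La n/L:n\in\{-L,\dots,L\}^d\}$ in the cube $\cQ=[-\La,\La]^d$, whereas $\varkappa_W(x)=(2\La)^{-d}\int_\cQ e^{ix\cdot\xi}\,d\xi$ by~\eqref{E.KWint}. Concretely, I would factor $\sum_{0<|n|_\infty\le L}e^{i\La n\cdot x/L}=\prod_{j=1}^d D_L(x_j)-1$ with $D_L(t):=\sum_{k=-L}^L e^{i\La kt/L}=\sin\!\big((2L+1)\La t/(2L)\big)/\sin\!\big(\La t/(2L)\big)$, note that $d_L/(2L+1)^d\to1$ and $d_L^{-1}\to0$, and invoke the elementary limit $D_L(t)/(2L+1)\to\sin(\La t)/(\La t)$ to conclude that $\varkappa^L(x)\to\prod_{j=1}^d\sin(\La x_j)/(\La x_j)=\varkappa_W(x)$ pointwise. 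A slicker equivalent route: set $\mu_L:=d_L^{-1}\sum_{0<|n|_\infty\le L}\delta_{\La n/L}$, so that $\varkappa^L=\widehat{\mu_L}$ and $\varkappa_W=\widehat{\mu_W}$ with $\mu_W$ the normalized Lebesgue measure on~$\cQ$; since all these probability measures are supported in the fixed compact~$\cQ$, the weak convergence $\mu_L\rightharpoonup\mu_W$ — which by the Weierstrass theorem reduces to the one-dimensional moment Riemann sums $\tfrac1{2L+1}\sum_{k=-L}^L(\La k/L)^m\to\tfrac1{2\La}\int_{-\La}^{\La}t^m\,dt$ — yields convergence of the Fourier transforms.

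To promote pointwise to $C^r(K)$ convergence I would differentiate under the finite sum: $\pd^\al\varkappa^L(x)=d_L^{-1}\sum_{0<|n|_\infty\le L}(i\La n/L)^\al e^{i\La n\cdot x/L}$ is again a Riemann sum, converging pointwise to $\pd^\al\varkappa_W(x)=(2\La)^{-d}\int_\cQ(i\xi)^\al e^{ix\cdot\xi}\,d\xi$, and moreover $|\pd^\al\varkappa^L(x)|\le\La^{|\al|}$ uniformly in~$L$ and~$x$. Hence for each $|\al|\le r+1$ the family $\{\pd^\al\varkappa^L\}_L$ is uniformly bounded on~$K$, so $\{\varkappa^L\}_L$ is bounded in $C^{r+1}(K)$ and therefore precompact in $C^r(K)$ by Arzel\`a--Ascoli; pointwise convergence of all derivatives of order $\le r$ then identifies the limit and forces $\varkappa^L\to\varkappa_W$ in $C^r(K)$. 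I do not expect a genuine obstacle here: the only point deserving a moment's care is this last passage from pointwise to $C^r$ convergence — equivalently, that the grid-approximation error of the Riemann sums be uniform for $x\in K$ and $|\al|\le r$, which holds because the integrands $(i\xi)^\al e^{ix\cdot\xi}$ have a modulus of continuity on~$\cQ$ that is uniform over $x\in K$. Everything else — the zero-mode removal, the $d_L$ versus $(2L+1)^d$ bookkeeping — is routine.
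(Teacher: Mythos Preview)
Your proposal is correct and follows essentially the same route as the paper: for (i) the identical direct computation using $\bE(a_n\overline{a_{n'}})=\de_{n,n'}$, and for (ii) the same recognition of $\varkappa^L$ as the Fourier transform of the discrete probability measure $\mu_L=d_L^{-1}\sum_{0<|n|_\infty\le L}\de_{\La n/L}$ converging weakly to normalized Lebesgue measure on~$\cQ$. The only minor technical difference is in how Arzel\`a--Ascoli is deployed: the paper applies it on the $\xi$-side, arguing that the test-function family $\{f_{\al,x}(\xi)=(i\xi)^\al e^{ix\cdot\xi}:x\in K,\ |\al|\le r\}$ is precompact in $C(\cQ)$ so that weak convergence of measures is automatically uniform over it, whereas you apply it on the $x$-side, using the uniform $C^{r+1}(K)$-bound on $\{\varkappa^L\}_L$ to get $C^r(K)$-precompactness and then identifying the limit via pointwise convergence of derivatives. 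Both arguments are valid and of comparable length; yours is perhaps slightly more self-contained in that it does not rely on the (standard but sometimes unstated) fact that weak convergence of probability measures is uniform over precompact families of continuous functions.
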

\begin{proof}
Just as in~\eqref{Eb}, the Gaussian random variables that enter the definition of~$V^L$ satisfy
\[
\bE
(a_n\overline{a_{n'}})=\de_{n, n'}\,.
\]
Therefore, as $V^{L,q}$ is real valued,
\begin{align*}
	\ka^{L,q}(x,x')& = d_L^{-1}\sum_{0<|n|_\infty,|n'|_\infty\le L}\bE(a_n\overline{a_{n'}})\,e^{in\cdot (q+\La x/L)}\, e^{-in'\cdot (q+\La x'/L)}
	\\&=d_L^{-1}\sum_{0<|n|_\infty\le L} e^{i\Lambda n\cdot (x-x')/L}=\int_{\cQ}e^{i\xi\cdot(x-x')}d\rho_L(\xi)=:\varkappa^L(x-x')\,.
\end{align*}
Here we have defined the discrete probability measure
\[
{\rho}_L\coloneqq \frac{1}{d_L}\sum_{0<|n|_\infty\le L}\delta_{\La n/L}\,,
\]
whose support is contained in the cube $\cQ:=[-\La,\La]^d$.
This is the first assertion of the proposition.

To prove the second assertion, start by recalling that, as shown in~\eqref{E.KWint}, the covariance kernel of~$W$ can be written as
\[
\varkappa_W(x)= \int_{\cQ}e^{ix\cdot\xi}\, (2\La)^{-d}\,d\xi\,,
\]
where $ (2\La)^{-d}\, d\xi$ is the normalized Lebesgue measure on~$\cQ$.
The claim that $\varkappa^L \to\varkappa_W$  is then a consequence of the fact that, by standard properties of Riemann sums, the measure $\rho_L$ converges weakly to the normalized Lebesgue measure on~$\cQ$. To see this, let us write
\begin{equation}\label{E.convAA}
\pd^\al \varkappa^L(x)-\pd^\al \varkappa_W(x)= \int_{\cQ}(i\xi)^\al \, e^{i\xi\cdot x}\, \left[d\rho_L(\xi)- (2\La)^{-d}\,d\xi\right]\,.
\end{equation}
For each $x\in\RR^d$ and each multiindex~$\al$, consider the function
\[
f_{\al,x}(\xi):=(i\xi)^\al \, e^{i\xi\cdot x} \,.
\]
For each compact subset~$K\subset\RR^d$ and each positive integer~$r$, the set of functions
\[
\left\{ f_{\al,x}: x\in K,\; |\al|\leq r\right\}
\]
is bounded in $C^{r+1}(\cQ)$, and therefore precompact in $C^r(\cQ)$ by the Arzel\`a-Ascoli theorem. It then follows from the expression~\eqref{E.convAA} and from the weak convergence of measures $\rho^L \rightharpoonup (2\La)^{-d}\,d\xi$ that $\varkappa^L$ converges to $\varkappa_W$ in $C^r(K)$ as $L\to\infty$, for any $r,K$ as above. The proposition is then proven.
\end{proof}

We are now ready to establish the central result of this subsection, which asserts that, for every $q\in\TT^d$, the probability measure $\mu^{L,q}$ defined by the rescaled potential $V^{L,q}$ converges to that of~$W$ as $L\to\infty$. Intuitively speaking, this is related to the fact that the dimension of the vector space where $\mu^{L,q}$ is supported tends to infinity as $L\to\infty$, as well as to the convergence of the covariance kernel presented in Proposition~\ref{P.covL}.

\begin{lemma}\label{L.convmu}
Fix some $R>0$ and denote by~$\mu^{L,q}_R$
and~$\mu_{W,R}$, respectively, the probability measures on $C^r(B_R)$ defined by
the Gaussian random fields~$V^{L,q}$ and~$W$. Then, for any $q\in \TT^d$, the
measures~$\mu^{L,q}_R$ converge weakly to~$\mu_{W,R}$ as $L\to\infty$.
\end{lemma}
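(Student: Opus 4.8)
The plan is to show that weak convergence of these Gaussian measures on the Fréchet space $C^r(B_R)$ follows from convergence of all finite-dimensional distributions together with a tightness/equicontinuity argument, both of which are encoded in the convergence of covariance kernels established in Proposition~\ref{P.covL}. Concretely, since $\mu^{L,q}_R$ and $\mu_{W,R}$ are the laws of \emph{centered Gaussian} random fields on $B_R$, each is completely determined by its covariance bifunction; and by Proposition~\ref{P.covL}, $\ka^{L,q}(x,x')=\varkappa^L(x-x')\to\varkappa_W(x-x')=\ka_W(x,x')$ in $C^r(K\times K)$ for every compact $K$, in particular uniformly (with all derivatives up to order $r$) on $\overline{B_R}\times\overline{B_R}$. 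The first step is to record that this uniform convergence of covariances implies convergence of finite-dimensional distributions: for any finite set of points $x_1,\dots,x_m\in B_R$ and any multiindices $\al_1,\dots,\al_m$ with $|\al_i|\le r$, the Gaussian vectors $\big(\pd^{\al_i}V^{L,q}(x_i)\big)_{i=1}^m$ have covariance matrices $\big(\pd_x^{\al_i}\pd_{x'}^{\al_j}\varkappa^L(x_i-x_j)\big)_{i,j}$ converging entrywise to those of $\big(\pd^{\al_i}W(x_i)\big)_{i=1}^m$, hence converge in distribution (convergence of mean-zero Gaussians is equivalent to convergence of covariance matrices, via characteristic functions).

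The second step is tightness of the family $\{\mu^{L,q}_R\}_{L\ge1}$ in $C^r(B_R)$. Here I would use that $C^r(\overline{B_R})$ embeds compactly into $C^{r}(\overline{B_R})$ refined via Arzelà--Ascoli: it suffices to produce, uniformly in $L$, a modulus-of-continuity bound for the top-order derivatives $\pd^\al V^{L,q}$, $|\al|=r$, of the Kolmogorov--Chentsov type. This is where Proposition~\ref{P.covL} does the work again: for $|\al|=r$ one has
\[
\bE\big|\pd^\al V^{L,q}(x)-\pd^\al V^{L,q}(x')\big|^2 = 2\big(\pd_x^\al\pd_{x'}^\al\varkappa^L(0)-\pd_x^\al\pd_{x'}^\al\varkappa^L(x-x')\big)\,,
\]
and since $\varkappa^L\to\varkappa_W$ in $C^{r+1}(K)$ for, say, $K=\overline{B_{2R}}$, the functions $\varkappa^L$ are bounded in $C^{2r+1}$ (or at least $C^{2r+2}$) on that compact set uniformly in $L$; Taylor expansion then gives $\bE|\pd^\al V^{L,q}(x)-\pd^\al V^{L,q}(x')|^2\lesssim |x-x'|^2$ with a constant independent of $L$ and $q$. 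For centered Gaussian fields this upgrades (Kolmogorov's continuity criterion, or the standard chaining bound for Gaussian processes) to a uniform-in-$L$ bound on $\bE\|\pd^\al V^{L,q}\|^p_{C^{0,\be}(B_R)}$ for some $\be\in(0,1)$ and all $p<\infty$; combined with the uniform bound $\bE\|V^{L,q}\|_{C^r(B_R)}^2\lesssim 1$ (again from $\varkappa^L$ being bounded in $C^{2r}$), Markov's inequality and Arzelà--Ascoli yield tightness: for every $\vep>0$ there is a compact $\cK_\vep\subset C^r(B_R)$ with $\mu^{L,q}_R(\cK_\vep)>1-\vep$ for all $L$.

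The third and final step is to combine the two: by Prokhorov's theorem tightness gives that every subsequence of $\{\mu^{L,q}_R\}$ has a weakly convergent further subsequence, and by Step~1 any such weak limit has the same finite-dimensional distributions as $\mu_{W,R}$; since point evaluations (together with their derivatives, which are continuous functionals on $C^r(B_R)$) generate the Borel $\si$-algebra of $C^r(B_R)$, finite-dimensional distributions determine a Borel probability measure on this space, so every subsequential limit equals $\mu_{W,R}$, whence $\mu^{L,q}_R\rightharpoonup\mu_{W,R}$. The main obstacle is Step~2, the uniform-in-$L$ tightness: one must be careful that the modulus-of-continuity estimate for the highest derivatives $\pd^\al V^{L,q}$ is genuinely uniform in $L$ (and in $q$, though $q$-independence is already guaranteed by Proposition~\ref{P.covL}(i)), which is why it is important that Proposition~\ref{P.covL} gives $C^r$-convergence of kernels for \emph{every} $r$, so that one can afford to differentiate the kernel two extra times and Taylor-expand. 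The rest is soft functional analysis (Prokhorov) plus the elementary fact that centered Gaussians are determined by their covariances.
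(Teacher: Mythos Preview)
Your proposal is correct and follows the same three-step architecture as the paper: convergence of finite-dimensional distributions via Proposition~\ref{P.covL}, tightness in $C^r(B_R)$, and then Prokhorov plus the fact that finite-dimensional distributions determine the law. The only genuine difference is in how tightness is obtained. You use a Kolmogorov--Chentsov increment bound on the top derivatives, exploiting that $\varkappa^L$ is uniformly bounded in $C^{2r+2}$ (which is legitimate since Proposition~\ref{P.covL} gives convergence in $C^s$ for \emph{every} $s$), to get a uniform H\"older modulus and hence Arzel\`a--Ascoli precompactness. The paper instead bounds $\bE\|V^{L,q}\|_{H^s(B_R)}^2$ directly from the diagonal of the covariance kernel, applies Sobolev embedding to control $\bE\|V^{L,q}\|_{C^{r+1}(B_R)}^2$ uniformly in~$L$, and then uses Chebyshev plus the compact embedding $C^{r+1}\hookrightarrow C^r$. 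Both routes are standard; the Sobolev route is marginally shorter since it avoids the chaining/Kolmogorov step and packages the extra regularity in one line, while your route is perhaps more transparent in that it stays entirely within the Gaussian-process toolkit and makes explicit which derivatives of the kernel are actually used.
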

\begin{proof}
	Let us start by noting that all the finite dimensional distributions of
	the fields $V^{L,q}$ converge to those of~$W$ as $L\to\infty$. Specifically, consider any finite number of points $x^1,\dots ,
	x^n\in\RR^d$, and any
	multiindices $\al^1,\dots,\al^n$ with $|\al|\leq r$. Then it is not hard to see that the Gaussian vectors of zero
	expectation
	\[
	(\pd^{\al^1}V^{L,q}(x^1), \dots, \pd^{\al^n}V^{L,q}(x^n))\in\RR^{n}
	\]
	converge in distribution to the Gaussian vector
	\begin{equation}\label{Gvect2}
		(\pd^{\al^1}W(x^1),\dots, \pd^{\al^n}W(x^n))
	\end{equation}
	as $L\to\infty$. This follows from the fact that their probability
	density functions are completely determined by the $n\times n$
	variance matrix
	\[
	\Si^L:=
	\Big(\pd_x^{\al^l}\pd_{x'}^{\al^m}\ka^{L,q}(x,x')\big|_{(x,x')=(x^l,x^m)}\Big)_{1\leq
		l,m\leq n}\,,
	\]
	which converges to
	$\Si:=(\pd_x^{\al^l}\pd_{x'}^{\al^m}\varkappa(x,x')|_{(x,x')=(x^l,x^m)})$ as
	$L\to\infty$ by Proposition~\ref{P.covL}. The latter, of course, is
	the covariance matrix of the Gaussian vector~\eqref{Gvect2}.

	It is well known~\cite{BI} that this convergence of arbitrary Gaussian vectors
	is not enough to conclude that $\mu^{L,q}_R$ converges weakly
	to~$\mu_{W,R}$. However, notice that for
	any integer~$s\geq0$, the expectation of the $H^s$-norm of~$V^{L,q}$ is uniformly bounded:
	\begin{align*}
		\bE\|V^{L,q}\|_{H^s(B_R)}^2&=\sum_{|\al|\leq s}\bE\int_{B_R}
		|D^\al V^{L,q}(x)|^2\, dx\\
		&=\sum_{|\al|\leq s}\int_{B_R}  D_x^\al D_{x'}^\al
		\ka^{L,q}(x,x')\big|_{x'=x}\, dx\\
		&\xrightarrow[L\to\infty]{\phantom{L}}
		\sum_{|\al|\leq s}\int_{B_R}D_x^\al
		D_{x'}^\al\ka(x,x')\big|_{x'=x}\, dx<M_{s,R}\,.
	\end{align*}
	To pass to the last line, we have used Proposition~\ref{P.covL} once
	more. As the constant~$M_{s,R}$ is independent of~$L$, Sobolev's
	inequality ensures that
	\[
	\sup_L \bE\|V^{L,q}\|_{C^{r+1}(B_R)}^2\leq C\sup_L \bE \|V^{L,q}\|^2_{H^{r+2+\lfloor \frac d2\rfloor}(B_R)}<M
	\]
	for some constant~$M$ that only depends on~$R$ and~$d$. Using Chebyshev's inequality, this implies that, for all large enough~$L$ and any $\ep>0$, the probability of the event
	\[
	\cA_\ep:= \big\{ w\in C^r(B_R): \|w\|_{C^{r+1}(B_R)}^2> M/\ep\big\}
	\]
	is bounded as
	\[
	\mu^{L,q}_R(\cA_\ep)<\ep\,.
	\]
	Since the set $\big\{ w\in C^r(B_R): \|w\|_{C^{r+1}(B_R)}^2\leq M/\ep\big\}$ is a precompact subset of $C^r(B_R)$ by the Arzel\`a--Ascoli theorem, we conclude
	that the sequence
	of probability measures $\mu^{L,q}_R$ is tight.
	Therefore, a straightforward extension to jet spaces of the classical
	result about the
	convergence of probability measures on the space of continuous
	functions~\cite[Theorem 7.1]{BI}, carried out in~\cite{Wil86},
	permits to conclude that $\mu^{L,q}_R$ indeed converges weakly to~$\mu_{W,R}$ as $L\to\infty$. The lemma is then proven.
\end{proof}

\section{Proof of Theorem~\ref{T1}}
\label{S.conclusion}

We are now ready to prove Theorem~\ref{T1}.


\subsubsection*{Coexistence of chaos and invariant tori with a probability that tends to~$1$}

Using the same notation as in Definition~\ref{D.NR}, we start by noticing that the map
\[
C^r(\RR^d)\ni w\mapsto\Phi_R(w)
\]
is lower semicontinuous provided that $r\geq 2d+1$. When it counts the number of horseshoes, this follows from the persistence of hyperbolic invariant sets~\cite[Theorem~5.1.2]{GH}. In the case of invariant tori, the lower semicontinuity follows from the isoenergetic KAM theorem~\cite[Theorem~3.6]{Haro} thanks to the assumption that the invariant tori we consider satisfy uniform Diophantine and twist conditions (the isoenergetic KAM theorem proved in~\cite{Haro} is for analytic Hamiltonian fields, but an analogous result holds for $C^r$ Hamiltonians, with $r\geq 2d+1$, see e.g.~\cite[Section~6.3.2]{Arnold}). Then
\[
\cA_{R,R_0}:= \{w\in C^r(B_R): \Phi_{R_0}(w)\geq 1\}
\]
is an open subset of $C^r(B_R)$ for any $0<R_0<R$. We recall that by definition, the horseshoes and invariant tori counted by $\Phi_{R_0}$ are contained in the intersection of the set $B_{R_0}\times B_{R_0}$ with the energy interval $\{h_1<H_w<h_2\}$.

As the set $\cA_{R,R_0}$ is open and the
measures $\mu^{L,q}_R$ converge weakly to~$\mu_{W,R}$ as $L\to\infty$, cf. Lemma~\ref{L.convmu}, it is well known (see e.g.~\cite[Theorem 2.1.iv]{BI}) that
 \begin{align*}
	\liminf_{L\to\infty}\mu^{L,q}_R(\cA_{R,R_0})&\geq
	\mu_{W,R}(\cA_{R,R_0})=\mu_W(\cA_{R_0})\,,
	\end{align*}
	where we have defined the event
	\[
	\cA_{R_0}:=\{w\in C^r(\RR^d): \Phi_{R_0}(w)\geq 1\}\,.
	\]
In turn, taking $\La>\La_0$, by Theorem~\ref{T:chaos}, for any $\de>0$ one can take a large enough~$R_0$ such that
\[
\mu_W(\cA_{R_0})>1-\tfrac\de2\,.
\]
Consequently, there is some $L_0$  such that
\begin{equation}\label{muLde}
\mu^{L,q}_R\big(\{w\in C^r(B_R): \Phi_{R_0}(w)=0\}\big)<\de
\end{equation}
for all $L>L_0$. As the measure $\mu^{L,q}_R$ does not depend on~$q$, $L_0$ depends on~$\de$ and~$R_0$ but not on~$q$.

Let us fix a point $q^k$ inside each cube $Q_k^N$ of the $N$-grid~$\cG^N$ and consider the balls
\[
B^{k,L}:= \left\{q\in\TT^d: |q-q^k|<\La R_0 L^{-1}\right\}
\]
centered at these points and of radius $\La R_0 L^{-1}$. Note that the closure of ${B^{k,L}}$ is contained in $Q_k^N$ for all large enough~$L$.

The key observation now is that $(x(t),P(t))$ is a trajectory of the Hamiltonian vector field defined by the random potential $V^{L,q^k}$ which is contained in~$B_{R_0}\times\RR^d$, if and only if
\begin{equation}\label{rescaling}
(q(t),p(t)):=\left(q^k + \frac\La L x(L\La^{-1}t), \; P(L\La^{-1}t) \right)
\end{equation}
is a trajectory of the Hamiltonian vector field~$X^L$ (see Equation~\eqref{pot_rando}) that is contained in $B^{k,L}\times\RR^d$; notice that this equivalence requires a rescaling of the time variable.

The transformation~\eqref{rescaling} obviously preserves the number of objects we count and the value of the energy levels:
\[
\frac12 |p|^2+V^L(q)=\frac12 |P|^2+V^L(q^k+\La x/L)=\frac12 |P|^2+V^{L,q^k}(x)\,.
\]
The symplectic forms are connected through the relation $dq\wedge dp = \frac\La L dx\wedge dP$, so the volume as computed using the $(q,p)$ variables is $(\La/L)^d$ times that computed in the $(x,P)$ variables. Therefore, for $L>L_0$, the probability that $X^L$ does not have a horseshoe and a set of invariant tori of volume $V_0(\La/L)^d>0$ in
\begin{equation}\label{QNkenergy}
(Q^N_k\times B_R) \cap \{h_1< H^L<h_2\}
\end{equation}
is at most~$\de$
by the estimate~\eqref{muLde}. Hence, the probability that~$X^L$ has a horseshoe and a positive-volume set of invariant tori (of size of order $L^{-d}$) in each of the $(2N)^d$ sets of the form~\eqref{QNkenergy} associated with the $N$-grid~$\cG^N$ is at least $1-(2N)^d\de$.
As $N$ is fixed, this probability can be taken arbitrarily close to~1 by picking~$L$ large enough.

\subsubsection*{Estimate for the expectation values}

By Theorem~\ref{T:chaos}, taking $\La>\La_0$ one can pick a large enough constant~$R_0$ so that
\[
\bE \cH_{R_0}>1\,,
\]
where $\bE$ denotes the expectation associated with the probability measure~$\mu_W$.
As discussed above, the functional $C^r(\RR^d)\ni w\mapsto \cH_{R_0}(w)$ is lower semicontinuous. Since $\mu^{L,q}_{R_0}$ converges
weakly to~$\mu_{W,R_0}$ as $L\to\infty$ by Lemma~\ref{L.convmu}, it is standard (see e.g.~\cite[Exercise 2.6]{BI}) that
	\[
	\liminf_{L\to\infty}\bE^{L,q}\cH_{R_0} \geq \bE
	\cH_{R_0}>1\,.
	\]
	Since the measure $\mu^{L,q}$ is in fact independent of~$q$, there is some $L_0$ independent of~$q$ such that $\bE^{L,q}\cH_{R_0}>1$ for all $q\in\TT^d$ and all $L>L_0$.
	
	Given a set $U\subset\TT^d\times\RR^d$ and a potential $w\in C^r(\TT^d)$, let us denote by $\cH_U(w)$ the number of horseshoes of the vector field~$X_w$ that are contained in the set
	\begin{equation}\label{setU}
	U\cap \{h_1<H_w<h_2\}\,.
	\end{equation}
	In view of the scaling property described in Equation~\eqref{rescaling}, now it suffices to pick, for each $L>L_0$, a collection of $\lfloor c_* L^d\rfloor$ pairwise disjoint balls $B^l$ centered at points $Q^l\in\TT^d$ and of radius $\La R_0 L^{-1}$ to conclude that the expected number of horseshoes that the vector field~$X^L$ has on the set $\{h_1<H^L<h_2\}$ is at least
	\begin{align*}
	\bE^L\cH_{\TT^d\times\RR^d} &\geq \sum_{l=1}^{\lfloor c_* L^d\rfloor} \bE^L \cH_{B^l\times B_{R_0}} \geq  \sum_{l=1}^{\lfloor c_* L^d\rfloor} \bE^{L,Q^l} \cH_{R_0}>  \nu_* L^d\,.
	\end{align*}
	Here $\nu_*$ is a positive constant related to the packing of spheres constant, which also depends on $\La$ and $R_0$.
	
	Now let us denote by $\cV_U(w)$ the inner volume of the set of Diophantine invariant tori of~$X_w$ that are contained in~\eqref{setU}. Arguing as before, and noting that the volume of the set of invariant tori picks a factor of $(\La/L)^d$ by the scaling properties of the transformation~\eqref{rescaling}, one infers that
	\begin{align*}
	\bE^L\cV_{\TT^d\times\RR^d} &\geq  \sum_{l=1}^{\lfloor c_* L^d\rfloor} \bE^L \cV_{B^l\times B_{R_0}} \geq  V_0(\La/L)^d\sum_{l=1}^{\lfloor c_* L^d\rfloor} \bE^{L,Q^l} \cT_{R_0}>  \nu_*
	\end{align*}
	for some positive constant $\nu_*$.  The theorem is then proven.

\subsubsection*{Proof of Corollary~\ref{cor}}

For arbitrary $d\geq2$, it is known that a Hamiltonian system that is completely integrable with nondegenerate first integrals, has zero topological entropy~\cite{Pater}, and hence it cannot exhibit any horseshoes. When $d=2$, Moser proved~\cite[Corollary 4.8.5]{GH} the stronger result that any Hamiltonian system with a horseshoe does not admit a global analytic second first integral (independent of the Hamiltonian). The corollary then follows.\hfill\qed

\section{Random Hamiltonian systems on an arbitrary compact
  manifold}
\label{S.manifolds}

In this section we will sketch how the results of this paper can be extended, with minor modifications, to the case of natural Hamiltonian systems defined on an arbitrary compact $d$-dimensional manifold~$M$.

Let us start by picking a smooth Riemannian metric~$g$ on~$M$ and defining the Hamiltonian function on the cotangent bundle $T^*M$ associated with a smooth potential $V\in C^\infty(M)$ as
\[
H_V(q,p):=\frac12|p|^2_g+V\,.
\]
To define a random potential on~$M$ analogous to~\eqref{VL}, one can use the eigenvalues and eigenfunctions of the Laplacian on~$M$ as a substitute of the frequencies and exponentials employed in~\eqref{VL}. Specifically, let $\{u_j\}_{j=0}^\infty$ be an $L^2(M)$-orthonormal basis of real-valued eigenfunctions of the Laplacian on~$M$, which satisfies the equation
\[
\De_M u_j+\la_j u_j=0
\]
on $M$ with constants
\[
0=\la_0<\la_1\leq \la_2\leq\cdots
\]
It is well known that $u_0$ is constant and that $\la_j$ tends to infinity as $c_M j^{2/d}+o(j^{2/d})$, where $c_M$ is a positive constant depending on the manifold.

One can then define an ensemble of random potentials on~$M$ analogous to~\eqref{VL}. Specifically, for each $J\geq1$ we introduce the random Gaussian potential
\[
\tV^J(q):= \frac1{\sqrt{J}}\sum_{j=1}^J a_j \, u_j(q)\,,
\]
where $a_j$ are independent standard random variables. When $(M,g)$ is the flat torus~$\TT^d$, we recover the situation studied in the rest of the paper because, in this case, $\tV^J \sim V^L$ when $J\sim L^d$. Note that the potential is a $C^\infty$ function because it is a finite linear combination of Laplace eigenfunctions, which are smooth.

Letting $X^J$ be the Hamiltonian vector field on $T^*M$ associated with the random potential~$\tV^J$, one can prove the following result:

\begin{theorem}\label{T2}
The probability that the random Hamiltonian vector field~$X^J$ is neither integrable with nondegenerate first integrals nor ergodic tends to~$1$ as $J$~tends to infinity.
More precisely, let $\{U^k:1\leq k\leq N\}\subset M$ be a collection of $N$ pairwise disjoint domains with smooth boundary. The probability that $X^J$ has a horseshoe and a set of ergodic invariant tori of positive volume (of order $J^{-1}$) contained in each of the sets $T^*U^k$ tends to~$1$ as $J\to\infty$. In both cases, the horseshoes and invariant tori we count are on the energy levels of a fixed nonempty interval $(h_1,h_2)$ which does not depend on $J$.

Furthermore, the expected number of horseshoes of~$X^J$ in these fixed energy levels tends to infinity as $J\to\infty$:
\[
\liminf_{J\to\infty} \bE^J\cH_{T^*M}=\infty\,.
\]
\end{theorem}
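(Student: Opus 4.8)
\textbf{Proof plan for Theorem~\ref{T2}.}
The strategy is to mirror the proof of Theorem~\ref{T1}, replacing the torus-to-$\RR^d$ rescaling by the local Weyl law on a general Riemannian manifold. The key point is that near any point $q_0\in M$, after rescaling distances by $\sqrt{\la_J}\sim (c_M J)^{1/d}$, the random field $\tV^J$ looks, in the high-frequency limit, like a translation-invariant Gaussian field on $\RR^d$ whose spectral measure is supported in a ball. First I would establish a local rescaling lemma: fixing geodesic normal coordinates centered at $q_0$, set $\tV^{J,q_0}(x):=\tV^J(q_0+\la_J^{-1/2}\La x)$ for $x$ in a fixed ball $B_R$, and show using the scaling asymptotics of the spectral function (the pointwise/derivative Weyl law of H\"ormander, as developed by Canzani--Hanin and used in Nazarov--Sodin-type arguments) that the covariance kernel of $\tV^{J,q_0}$ converges in $C^r(B_R\times B_R)$, as $J\to\infty$, to the covariance kernel of a translation-invariant field $\widetilde W$ on $\RR^d$ whose spectral measure is the normalized uniform measure on the ball $\{|\xi|\leq\La\}$ (rather than the cube $\cQ$). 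The metric being smooth, this convergence is uniform in $q_0$ over any compact piece of $M$, exactly as the $q$-independence of $\mu^{L,q}$ was used in the torus case.

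With this in hand the analogue of Lemma~\ref{L.convmu} follows by the same tightness-plus-finite-dimensional-convergence argument: the finite-dimensional distributions of $\tV^{J,q_0}$ converge to those of $\widetilde W$, and uniform bounds on $\bE\|\tV^{J,q_0}\|^2_{H^s(B_R)}$ (again from the Weyl law) give tightness in $C^r(B_R)$, so $\mu^{J,q_0}_R\rightharpoonup \mu_{\widetilde W,R}$. Next I would redo Section~\ref{S.dynamicsRd} for the field $\widetilde W$: Propositions~\ref{P.conv} and Lemma~\ref{L.ergodic} go through verbatim (ball-supported spectral measure with no atoms), and the support lemma (Lemma~\ref{L.support}) now identifies the support of $\mu_{\widetilde W}$ with the closure of the band-limited functions with spectrum in the ball. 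Crucially, the ball $\{|\xi|\leq\La\}$ still contains every frequency up to size $\La$, so for $\La$ large enough the approximation argument of Lemma~\ref{L.chaos} still produces a band-limited $W_1$ with $\|W_1-W_\eta\|_{C^r(K)}<\de$, and hence Theorem~\ref{T:chaos} holds with $\widetilde W$ in place of $W$: $\liminf_{R\to\infty}\Phi_R(\widetilde W)/R^d>\nu_0>0$ almost surely.

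The final step assembles everything as in Section~\ref{S.conclusion}. Lower semicontinuity of $w\mapsto\Phi_R(w)$ (from persistence of horseshoes and the $C^r$ isoenergetic KAM theorem) together with weak convergence gives, for each fixed $R_0$ and each $q_0$, $\liminf_{J\to\infty}\mu^{J,q_0}_R(\cA_{R,R_0})\geq\mu_{\widetilde W}(\cA_{R_0})$, which can be made $>1-\de$ by taking $R_0$ large. Choosing a point $q^k$ in each $U^k$ and a geodesic ball of radius $\La R_0\la_J^{-1/2}$ around it (contained in $U^k$ for $J\gg1$), the rescaling identity transports a horseshoe and a volume-$V_0(\La\la_J^{-1/2})^d\sim J^{-1}$ set of invariant tori of $X_{\tV^{J,q^k}}$ into $T^*U^k$ for $X^J$ on the energy band $(h_1,h_2)$; a union bound over the $N$ fixed domains gives probability $\geq 1-N\de\to1$. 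The expectation statement follows by packing $\lfloor c_* J\rfloor$ disjoint geodesic balls of radius $\sim\la_J^{-1/2}$ into $M$ (possible since $M$ has dimension $d$ and such balls have volume $\sim\la_J^{-d/2}\sim J^{-1}$) and summing $\bE^{J,Q^l}\cH_{R_0}>1$ over them, giving $\bE^J\cH_{T^*M}>\nu_* J\to\infty$.

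\textbf{Main obstacle.} The genuinely new ingredient, and the step I expect to be the most delicate, is the rescaled-covariance convergence lemma: on a general manifold one does not have exact trigonometric identities, so one must invoke the off-diagonal scaling asymptotics of the spectral function $\sum_{\la_j\leq\la}u_j(q)u_j(q')$ and control them together with all derivatives up to order $r$, uniformly for $q$ in a compact set and $x,x'$ in a fixed ball --- and one must check that the limiting spectral measure is genuinely the uniform measure on a Euclidean ball (isotropy appears because geodesic normal coordinates flatten the metric to second order at $q_0$). Everything downstream is a faithful, essentially cosmetic, transcription of the torus argument.
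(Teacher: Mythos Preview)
Your overall plan matches the paper's: rescale via the exponential map with factor $\La\la_J^{-1/2}$, replace Proposition~\ref{P.covL} by the H\"ormander/Canzani--Hanin local Weyl asymptotics (which indeed give the uniform measure on a Euclidean ball as the limiting spectral measure), and then rerun Sections~\ref{S.Rd}--\ref{S.conclusion} with the ball-supported field~$\widetilde W$ in place of~$W$. For the probability-tends-to-$1$ statement this is correct and is exactly what the paper does.

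There is, however, a gap in your expectation argument. You pack $\lfloor c_*J\rfloor$ geodesic balls and sum $\bE^{J,Q^l}\cH_{R_0}>1$ over them, which would give $\bE^J\cH_{T^*M}>\nu_*J$. This requires that the inequality $\bE^{J,q}\cH_{R_0}>1$ hold \emph{simultaneously} at $\sim J$ base points for a single large~$J$, i.e.\ that the weak convergence $\mu^{J,q}_R\rightharpoonup\mu_{\widetilde W,R}$ be uniform in~$q$ at the level of expectations of the unbounded lower-semicontinuous functional~$\cH_{R_0}$. Your justification (``the metric being smooth, this convergence is uniform in~$q_0$'') establishes at most uniform $C^r$-convergence of covariance kernels; promoting that to a uniform lower bound on $\bE^{J,q}\cH_{R_0}$ is a genuinely additional step that you do not carry out. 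The paper explicitly flags this obstruction: on a general manifold the measures $\mu^{J,q}$ are no longer $q$-independent, and this is precisely why Theorem~\ref{T2} claims only $\liminf_{J\to\infty}\bE^J\cH_{T^*M}=\infty$ rather than a growth rate in~$J$. The paper's argument instead fixes an arbitrary finite~$N$, chooses $N$ base points, obtains $\bE^J\cH_{T^*M}>N\eta$ for all $J>J_0(N)$, and then lets $N\to\infty$. You should either downgrade your conclusion to this fixed-$N$ argument (which suffices for what Theorem~\ref{T2} actually asserts), or supply the missing uniformity estimate.
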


The proof goes exactly as in the case of the flat torus, with three minor variations. Firstly, given any point $q\in M$, the rescaled potentials must be defined as
\[
\tV^{J,q}(x):= \tV^J(\exp_q(\La\, \la_J^{-1/2}x))\,,
\]
where $\exp_q$ is the exponential map at~$q$. (Equivalently, one could replace $\la_J$ in this formula by the asymptotic approximation $c_MJ^{2/d}$.) Second, Proposition~\ref{P.covL} must be replaced by the convergence results for the covariance kernel of $\tV^{J,q}$ as a consequence of H\"ormander's local Weyl law, see e.g.~\cite{Canzani}. Third, the measures $\mu^{J,q}$ are no longer independent of~$q$, in general. This is why we cannot easily derive the estimate for the expected number of horseshoes and invariant tori, which appears in Theorem~\ref{T1} but not in~\ref{T2}. However, this does not affect the proof of the first half of Theorem~\ref{T1} (i.e., the first part of Section~\ref{S.conclusion}) because the number~$N$ is fixed throughout. In fact, for each fixed~$N$ we can proceed as in the second part of Section~\ref{S.conclusion} to write:
\begin{align*}
\bE^J \cH_{T^*M}\geq \sum_{l=1}^N \bE^{J,Q_l}\cH_{R_0}>N\eta\,,
\end{align*} 
for all large enough $J$ and some $\eta>0$, thus implying that
\[
\liminf_{J\to\infty} \bE^J\cH_{T^*M}=\infty\,.
\]
The nontrivial dependence of $\mu^{J,q}$ with $q$ prevents us from estimating the growth of $\bE^J\cH_{T^*M}$ in terms of $J$.

\section*{Acknowledgements}

This work has received funding from the European Research Council (ERC) under the European Union's Horizon 2020 research and innovation programme through the grant agreement~862342 (A.E.). It is partially supported by the grants CEX2019-000904-S, RED2018-102650-T and PID2019-106715GB GB-C21 (D.P.-S., A.R.) funded by MCIN/AEI/10.13039/501100011033. A.R. is also a postgraduate fellow of the City Council of Madrid at the Residencia de Estudiantes (2020--2022).

\bibliographystyle{amsplain}

\end{document}